\documentclass[draft]{birkjour}
\usepackage{hyperref}
\usepackage{tikz}
\usepackage{pgfplots}

\usepackage[noadjust]{cite}
\usepackage{xcolor}
\RequirePackage[all]{xy}

\newtheorem{thm}{Theorem}[section]
\newtheorem{corollary}[thm]{Corollary}
\newtheorem{lemma}[thm]{Lemma}

\theoremstyle{definition}
\newtheorem{definition}[thm]{Definition}
\theoremstyle{remark}

    \newtheorem{example}{Example}
\numberwithin{equation}{section}

\newcommand{\BibTeX}{B\kern-0.1emi\kern-0.017emb\kern-0.15em\TeX}
\newcommand{\XYpic}{$\mathrm{X\kern-0.3em\raisebox{-0.18em}{Y}}$-$\mathrm{pic}\,$}

\newcommand{\cl}{C \kern -0.1em \ell}  

\newcommand{\ed}{\end{document}}

\begin{document}

%
%
%
%
%
%
%
%
%

\title[Inverse and Determinant in $\cl_{p,q}$ over Odd-dimensional Spaces]{Explicit Formula for Inverse and\\ Determinant in Geometric Algebras\\ over Odd-dimensional Vector Spaces}
\author[K. Abdulkhaev]{Kamron  Abdulkhaev}
\address{%
HSE University\\ 
101000 Moscow\\
Russia\\}
\email{ksabdulkhaev@edu.hse.ru}

\author[D. Shirokov]{Dmitry Shirokov}
\address{%
HSE University\\ 
101000 Moscow\\
Russia\\ \\
\and \\ \\
Institute for Information Transmission Problems of the Russian Academy of Sciences\\
127051 Moscow\\
Russia\\}
\email{dshirokov@hse.ru}
\subjclass{Primary 15A66; Secondary 15A15, 68W30, 15-04}
\keywords{Basis-free formula, Clifford algebra, Geometric algebra, Determinant, Inverse, Grade projection, Operation of conjugation}
\date{\today}
\dedicatory{Last Revised:\\ \today}
\begin{abstract}
In this paper, we present  explicit formulas for the inverse and determinant in geometric (Clifford) algebras over vector spaces of dimension $n=7$. The derivation of these formulas is made possible by generalizing the concept of conjugation to basis conjugation operations. We further develop a general method for constructing such formulas over odd-dimensional spaces from the known even-dimensional case. To validate computational utility of the results, we provide a numerical implementation of the formulas. The code implementation is available at the repository \url{github.com/kamranuz/clifford_7d}. These formulas extend previous results for lower dimensions and offer new insights for applications in mathematical physics and computational geometry.
\end{abstract}
\label{page:firstblob}
\maketitle
\section{Introduction}

Geometric (Clifford) algebras provide a powerful algebraic framework that unifies various mathematical structures and finds extensive applications in mathematics, physics, and engineering \cite{Hestenes,Hitzer3,Lasenby,Lounesto}. Central to their utility is the ability to represent geometric transformations and encode multilinear algebraic operations in a coordinate-free manner. Among the fundamental algebraic invariants, the determinant plays a crucial role in characterizing invertibility.
While explicit determinant formulas are well-understood for geometric algebras of low dimensions, notably for $n \leq 6$ \cite{Acus,Hitzer,Shirokov} (see also Section 3), the case of $n=7$ presents unique challenges due to the increased algebraic complexity and the richer structure of the algebra \cite{Hitzer2}. Existing approaches often rely on additional algebra isomorphism constructions, which obscure the intrinsic geometric nature of the determinant and complicate direct computations within the algebra itself.

In this paper, we overcome these challenges by deriving an explicit formula that is independent of the representation for the inverse and determinant in geometric algebras $\cl_{p,q}$ with $p~+~q~=~7$. Our approach generalizes the classical notion of conjugation to a broader family of basis conjugation operations, enabling the expression of the determinant formula purely in terms of algebraic operations intrinsic to $\cl_{p,q}$ with $p+q=7$. This construction not only clarifies the algebraic structure underlying the determinant but also provides a practical computational tool for testing invertibility and calculating inverses of multivectors in geometric algebras. Beyond the specific result for $n=7$, we present a general method for constructing such determinant and inverse formulas for Clifford algebras over odd-dimensional vector spaces from known formulas for the lower even-dimensional case. 

We also provide a numerical implementation of the derived formulas with performance evaluation. The implementation enables direct computation of the determinant and inverse of any multivector in a  Clifford algebras over seven-dimensional vector spaces. Our performance benchmarks show that while the new formulas have a modest increase in computational cost, they offer substantially improved robustness and precision compared to previous methods. The implementation is available at the repository \url{github.com/kamranuz/clifford_7d}.

Our work extends and refines earlier determinant formulas obtained via algebra isomorphisms \cite{Hitzer2}, offering new insights and computational advantages. In contrast to known methods based on Faddeev-LeVerrier algorithm \cite{Shirokov, Prodanov}, our approach is not recursive and yields in explicit formulas. Such explicit formulas have interesting from a theoretical point of view. Given the importance of geometric algebras over  seven-dimensional vector spaces in mathematical physics, the explicit formulas presented here have potential applications in both theoretical investigations and numerical implementations~\cite{Hitzer3}.

This paper is an extended version of the short note in Conference Proceedings \cite{Inv7d}, where we presented the explicit isomorphism-free formula using basis conjugations, and its optimized version. In this extended version, we additionally provide a comprehensive numerical implementation and performance evaluation, and introduce a general recursive method to derive determinant formulas for Clifford algebras over odd-dimensional vector spaces directly from known even-dimensional cases.

The paper is organized as follows. In Section~2, we review the main definitions relevant to geometric algebra. In Section~3, we review known determinant formulas in geometric algebra for dimensions up to $n=6$. In Section~4, we focus on the Hitzer and Sangwine determinant formula for $n=7$ derived via algebra isomorphisms. In Section~5, we develop the explicit determinant formula independent of any algebra isomorphism by introducing basis conjugations. In Section~6, we present an optimized version of the formula. In Section~7, we provide a numerical implementation of the formulas and
evaluate their performance. In Section~8,  we present a method to derive such formulas in the for Clifford algebras over odd-dimensional vector spaces, using known formulas for lower even-dimensional vector spaces case. Finally, In Section~9, we offer conclusions and perspectives for further research.

\section{Main definitions}

Let us consider the (Clifford) geometric algebra $\cl_{p,q}$, $p+q=n$ \cite{Hestenes,Lasenby,Lounesto} with the generators $e_1$, $e_2$, \ldots, $e_n$ and the identity element $e$. The generators satisfy the conditions
\begin{eqnarray}
    e_ae_b+e_be_a=2\eta_{ab}e,\qquad a,b=1,\ldots,n, \nonumber
\end{eqnarray}
where $\eta =(\eta_{ab})={\rm diag}(1,\ldots, 1, -1, \ldots , -1)$ is the diagonal matrix with its first $p$ entries equal to $1$ and the last $q$ entries equal to $-1$ on the diagonal. 

The standard basis of $\cl_{p,q}$ is given by the set
\begin{eqnarray}
    \mathbb{B}_{p,q} := \{ e_{a_1a_2\cdots a_k} \mid a_1 < a_2 < \cdots < a_k,
    \quad k = 0, 1, \ldots, n\},\label{clbasis}
\end{eqnarray}
where each $e_{a_1a_2 \cdots a_k} = e_{a_1} e_{a_2} \cdots e_{a_k}$ denotes a product of generators with strictly increasing indices. This set $\mathbb{B}_{p,q}$ forms a basis of $\cl_{p,q}$. We call the subspace of $\cl_{p,q}$ of elements, which are linear combinations of the basis elements with multi-indices of length $k$, the subspace of grade $k$ and denote it by $\cl_{p,q}^k$. Elements of grade 0 are identified with scalars $\cl_{p,q}^0  \equiv \mathbb{R}, e \equiv 1$.
The projection of any element $U\in\cl_{p,q}$ onto the subspace $\cl^k_{p,q}$ is denoted by $\langle U\rangle_k$ in this paper.
We have
\begin{eqnarray}
    \langle U+V \rangle_k =\langle U \rangle_k+\langle V \rangle_k,\quad
    \langle\lambda U \rangle_k=\lambda \langle U \rangle_k,\quad
\lambda \in \mathbb{R},\quad
    U,V\in \cl_{p,q}.\label{proj}
\end{eqnarray}
An arbitrary element (multivector) $U\in\cl_{p,q}$ can be written in the form
\begin{eqnarray}
        U = \sum_{k=0}^n\langle U\rangle_k,\qquad
    \langle U\rangle_k \in \cl_{p,q}^k.\label{conjug}
\end{eqnarray}
The scalar $\langle U \rangle_0$ is called the scalar part of $U$. We have the property
\begin{eqnarray}
\langle UV \rangle_0=\langle VU \rangle_0,\qquad \forall U, V\in\cl_{p,q}.\label{comm}
\end{eqnarray}

\begin{definition}[\cite{Shirokov}]\label{def:conj}
\normalfont
Any operation of the form
\begin{eqnarray}
    U \mapsto
    \sum_{k=0}^n\lambda_k\langle U \rangle_k,\qquad
    \lambda_k = \pm 1\qquad
\end{eqnarray}
\normalfont
is called \textit{an operation of conjugation} in  $\cl_{p,q}$.
\end{definition}
Note that the operation of conjugation  is an involution, meaning that applying it twice yields the identity transformation.
The operations of conjugation commute with each other.
We have three classical operations of conjugation: the grade involution, the reversion, and the Clifford conjugation \footnote{The Clifford conjugation is a composition of the grade involution $\widehat{~~}$ and the reversion $\widetilde{~~}$. Note that some authors \cite{Lounesto} denote the Clifford conjugation by $\stackrel{\over\quad}{\quad}$. We do not use separate notation for the Clifford conjugation in this paper and write the combination of the two symbols $\widehat{~~}$ and $\widetilde{~~}$.}:
\begin{eqnarray}
    \widehat{U}=\sum_{k=0}^n (-1)^k\langle U \rangle_k,~~
    \widetilde{U}=\sum_{k=0}^n (-1)^\frac{k(k-1)}{2}\langle U \rangle_k,~~
    \widehat{\widetilde{U}}=\sum_{k=0}^n (-1)^\frac{k(k+1)}{2}\langle U \rangle_k.\label{conjug2}
\end{eqnarray}
These operations have the following properties
\begin{eqnarray}
\label{conj_prop}
    \widehat{UV}=\widehat{U}\widehat{V},\qquad
    \widetilde{UV}=\widetilde{V}\widetilde{U},\qquad
    \widehat{\widetilde{UV}}=\widehat{\widetilde{V}}\widehat{\widetilde{U}},\qquad
    \forall\, U,V \in\cl_{p,q}.\label{conjug2p}
\end{eqnarray}
\begin{definition}[\cite{Shirokov}]\label{def:det}
    Let $U\in\cl_{p,q}$ be an arbitrary multivector and $p+q=n$. Let us denote $N:=2^{[\frac{n+1}{2}]}$, where square brackets mean taking the integer part. We define a sequence of elements $U_{(k)}$ and scalars $C_{(k)}$ with $k=1,2,\ldots,N$ recursively as follows
    $$
       U_{(1)}:=U,\quad
       U_{(k+1)}:=U(U_{(k)}-C_{(k)}),\quad
       C_{(k)}:=\frac{N}{k}\langle U_{(k)}\rangle_0\in\cl_{p,q}^0.
    $$
    Then the determinant of an element $U\in\cl_{p,q}$ is defined as 
    $$
        {\rm Det}(U):= -C_{(N)}\in\cl_{p,q}^0.
    $$
\end{definition}
Unlike the straightforward definition of the determinant in geometric algebra given by ${\rm Det}(U):= {\rm det}(\beta(U))$  via matrix isomorphism $\beta$ from $\mathbb{C}\otimes\cl_{p,q}$ to certain matrix algebra of dimension $N$ (see for the details \cite{Shirokov}), the definition above is intrinsic to geometric algebra $\cl_{p,q}$. The proposed definition uses only addition operation, geometric multiplication, and projection operation. Nevertheless, it is equivalent to the straightforward definition of the determinant via the matrix determinant.

\begin{definition} \label{def:hyper}
The algebra of \textit{hyperbolic numbers} (also called \textit{split-complex numbers}) is defined as $\mathbb{S} := \{ z = x + j y \mid x, y \in \mathbb{R},\ j^2 = 1 \}$, where $j$ is the hyperbolic unit satisfying $j^2 = 1$, in contrast to the imaginary unit $i$ of complex numbers, where $i^2 = -1$. The set $\mathbb{S}$ forms a two-dimensional commutative algebra over the real numbers.
\end{definition}

\section{Known Determinant Formulas in Geometric Algebra for $n \leq 6$}

In this section, we provide a brief overview of known to date determinant formulas for multivectors in geometric algebras $\cl_{p,q}$ with $n = p + q \leq 6$. Determinant formulas are critical for characterizing invertibility and computing inverses in geometric algebras. 

We begin by introducing the necessary definitions and constructions. Following that, we summarize key results up to $n=6$ from \cite{Acus,Shirokov} in Theorem~\ref{nleq7}. 

For our convenience, we use the following notation for the operation of conjugation, introduced in \cite{Hitzer2}. Given  a multivector $U \in \cl_{p,q}$, we could represent operation of conjugation that changes signs of a subset of grades $M \subseteq {\{1,\ldots,n\}}$ in the following form 
    $$
        m_{M}(U) = U - 2\sum_{k\in M} \langle U \rangle_{k},
    $$
    where $\langle U\rangle_{k}$ denotes the projection of the element $U$ onto grade $k$.

\begin{example}
    For an element \( U \in \cl_{7,0} \) and the set of indices \( M = \{0,7\} \), we have
    $$
        m_{0,7}(U) = U - 2\langle U \rangle_{0} - 2\langle U \rangle_{7},
    $$
    where \( \langle U \rangle_{0} \) is the scalar part, and \( \langle U\rangle_{7} \) is the component of the highest grade.
\end{example}

Several results have been established concerning the computation of the determinant and inverse in geometric algebras. Explicit formulas for the inverse of Clifford algebra elements have been derived for dimensions $n\leq5$, as reported in \cite{Hitzer,Dadbeh,Shirokov0}, each employing distinct techniques. Later, an explicit formula for the $n = 6$ case was provided in \cite{Acus}. A summary of these findings can be found in~\cite{Shirokov}, and we present them here in Theorem \ref{nleq7}.
\begin{thm}[\cite{Shirokov}]\label{nleq7}
    For the cases $n\leq 6$, there exist the following determinant formulas~${\rm Det}:~\cl_{p,q}~\rightarrow~\cl_{p,q}^0$
    $$
        {\rm Det}(U)=UF(U)\in\cl_{p,q}^0,
    $$
    where
    {\small
    \begin{eqnarray}   
        F(U):=
        \begin{cases}
            \widehat{U},&\text{if $n=1$,}\\
            \widehat{\widetilde{U}},&\text{if $n=2$,}\\
            \widetilde{U}\widehat{U}\widehat{\widetilde{U}},&\text{if $n=3$,}\\
            \widetilde{U}m_{4}(\widehat{U}\widehat{\widetilde{U}}),&\text{if $n=4$,}\\
            \widetilde{U}m_{4,5}(\widehat{U}\widehat{\widetilde{U}})m_{4,5}(U\widetilde{U}m_{4,5}(\widehat{U}\widehat{\widetilde{U}})),&\text{if $n=5$,}\\
            \frac{1}{3}\widetilde{U}\widehat{U}\widehat{\widetilde{U}}m_{4,5,6}(\widehat{U}\widehat{\widetilde{U}}U\widetilde{U})&\\
            +\frac{2}{3}\widetilde{U}m_{4,5,6}\left(m_{4,5,6}(\widehat{U}\widehat{\widetilde{U}})m_{4,5,6}\left(m_{4,5,6}(\widehat{U}\widehat{\widetilde{U}})m_{4,5,6}(U\widetilde{U})\right)\right),&\text{if $n=6$.}
        \end{cases}\nonumber
    \end{eqnarray}}
    If  ${\rm Det}(U)\neq0$, then there exists $U^{-1}$ with the following explicit form
    $$
        U^{-1}=\frac{F(U)}{ {\rm Det}(U)}. 
    $$
\end{thm}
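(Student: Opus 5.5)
The plan is to verify three things for each $n\le 6$: that $UF(U)$ is a scalar, that $UF(U)$ equals ${\rm Det}(U)$ as given in Definition~\ref{def:det}, and that the inverse formula follows. The last point is immediate once the first two are known. In a finite-dimensional associative algebra, a one-sided inverse is two-sided, so if $UF(U)=\lambda\neq0$ is a scalar, then $U\cdot F(U)/\lambda=1$ forces $F(U)/\lambda=U^{-1}$. The real work is therefore to show that $UF(U)\in\cl^0_{p,q}$ and that this scalar coincides with $-C_{(N)}$.

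For $n=1,2$ I would compute directly. Writing $U=u+u_1e_1$ gives $U\widehat U=u^2-u_1^2e_1^2$. For $n=2$, the element $U\widehat{\widetilde U}$ is the classical quaternion-type norm, and in both cases the product is scalar by inspection. For higher $n$ I would use the following structural lemma. Let $W=U\widetilde U$ (or a similar product). Since $\widetilde W=W$, only grades $k$ with $(-1)^{k(k-1)/2}=1$ survive, namely $k\equiv 0,1 \pmod 4$. Likewise, an element fixed by a given product of conjugations lives only in the grades where those signs agree.

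For $n=3$, the product $U\widetilde U\widehat U\widehat{\widetilde U}$ can be grouped as $(U\widetilde U)(\widehat U\widehat{\widetilde U})=W\widehat{\widetilde{\widehat W}}$-type expressions. I would check that $W=U\widetilde U$ lies in grades $0,1$ and $3$ (with $3\equiv$ central pseudoscalar for $n=3$). The product is then a norm in the commutative subalgebra spanned by $1$ and $e_{123}$, which yields a scalar. For $n=4,5$ the same strategy applies: $W=U\widetilde U$ lies in grades $\{0,1,4,5\}$, and the sign flip $m_4$ (respectively $m_{4,5}$) acts as a "conjugate" that kills the higher grades. One step at a time, this reduces the problem to the lower case.

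To identify the scalar with ${\rm Det}(U)$, I would pass through the matrix representation $\beta$ of $\mathbb C\otimes\cl_{p,q}$ into $N\times N$ matrices, using the equivalence stated after Definition~\ref{def:det}. Each conjugation acts on $\beta(U)$ as an (anti)automorphism that preserves the determinant up to known signs; in particular, the classical ones act by transpose-like maps composed with conjugation by fixed matrices. Taking $\det$ of $\beta(U)\beta(F(U))=\lambda I_N$ gives $\det\beta(U)\cdot\det\beta(F(U))=\lambda^N$. Since $F(U)$ is a product of $N-1$ conjugates of $U$, up to the corrections $m_M$ of degree-matching factors, a homogeneity argument shows $\lambda$ is a degree-$N$ polynomial in the coefficients of $U$ which vanishes exactly when $\det\beta(U)$ does. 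Comparing leading behaviour at $U=1$ then fixes $\lambda=\det\beta(U)$. Alternatively, one can invoke Cayley--Hamilton via the Faddeev--LeVerrier recursion of Definition~\ref{def:det} as in \cite{Shirokov}.

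The main obstacle is $n=6$. There the formula is a nontrivial linear combination with weights $\tfrac13,\tfrac23$, and the individual terms are not scalars. I would handle it by setting $V=U\widetilde U$ and $\widehat V=\widehat U\widehat{\widetilde U}$, computing the grade decomposition of $V\widehat V$ (grades $0,1,4,5,6$, all mod symmetry), and then verifying by direct, ideally symbolic computer, expansion that the grade-$k$ parts cancel for $k\ge1$. Matching with ${\rm Det}$ would follow from the identity of Acus--Dargys \cite{Acus}, checked generically.
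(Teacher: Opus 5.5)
The genuine gap is the step identifying $\lambda(U):=UF(U)$ with ${\rm Det}(U)=\det\beta(U)$.

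First, the conjugations in $F$ are not all (anti)automorphisms. For example, $m_4(e_{12}e_{34})=-e_{1234}$, whereas $m_4(e_{12})m_4(e_{34})=m_4(e_{34})m_4(e_{12})=e_{1234}$, and the same happens for $m_{4,5}$ and $m_{4,5,6}$. So for $n\ge4$ you have no handle on $\det\beta(F(U))$, and $\det\beta(U)\det\beta(F(U))=\lambda^N$ does not determine $\lambda$. For $n\le3$, where only the classical conjugations appear and each preserves $\det\beta$, it does give $\lambda^N=(\det\beta(U))^N$. Hence $\lambda=\zeta\det\beta(U)$ with $\zeta^N=1$, and $U=1$ forces $\zeta=1$.

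Second, your fallback (homogeneous of degree $N$, same zero set, value $1$ at $U=1$) does not pin down a polynomial. In $\cl_{0,1}$, both ${\rm Det}(U)=u^2+u_1^2$ and $u^2+2u_1^2$ have all three properties. You also have only half of ``vanishes exactly when'': $\lambda(U)\neq0$ makes $U$ invertible, but nothing shows $\lambda(U)\neq0$ for every invertible $U$.

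The repair is to work over $\mathbb{C}$. The identity $UF(U)\in\cl^0_{p,q}$ is polynomial in the coordinates, so it persists in $\mathbb{C}\otimes\cl_{p,q}$. There $\beta$ is onto $M_N(\mathbb{C})$ for $n$ even, and onto the block-diagonal matrices $M_{N/2}(\mathbb{C})\oplus M_{N/2}(\mathbb{C})$ for $n$ odd. So $\det\beta$ is irreducible, resp.\ the product $\det A\cdot\det B$ of two distinct irreducibles in disjoint sets of variables. The half you do have says $\lambda$ vanishes on the zero locus of each such factor, so by the Nullstellensatz each factor divides $\lambda$. Since $\lambda$ is homogeneous of degree $N$ and $\lambda(1)=1$, this gives $\lambda=\det\beta(U)$. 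The argument is uniform in $n$, so you would not need \cite{Acus} for the matching with ${\rm Det}$ at all.

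The scalar-ness part also needs fixing, and for $n=6$ it is missing.
\begin{itemize}
\item For $n=3$, $W=U\widetilde U$ has no grade-$3$ part, since reversion is $-1$ on grade $3$. So $W\in\cl^0_{p,q}\oplus\cl^1_{p,q}$ and $UF(U)=W\widehat W=w_0^2-w_1^2$. It is not a norm on ${\rm span}\{1,e_{123}\}$.
\item For $n=4$, you additionally need that the pseudoscalar anticommutes with vectors. This kills $w_1w_4+w_4w_1$ in $Wm_4(\widehat W)$.
\item For $n=5$, ``one step at a time'' should be made explicit. Take $A=w_0+w_5$ (central), $B=w_1+w_4$, and $w_4=ve_{12345}$ for a vector $v$. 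Then $H:=Wm_{4,5}(\widehat W)=A^2-B^2\in\cl^0_{p,q}\oplus\cl^5_{p,q}$, hence $UF(U)=Hm_{4,5}(H)=h_0^2-h_5^2$.
\item For $n=6$, you offer no argument, only an unperformed computer expansion. ``Checked generically'' at numerical points would not be a proof.
\end{itemize}
The paper gives no proof of this theorem either: it imports it from \cite{Shirokov}, with the $n=6$ case going back to \cite{Acus}. So once the identification step is repaired, your proposal is a self-contained proof for $n\le5$. For $n=6$ it still rests on the same external verification.
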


\section{Hitzer \& Sangwine Result for $n=7$}
Recently, Hitzer and Sangwine proposed the following determinant formula for $n=7$ using a technique based on algebra isomorphisms~\cite{Hitzer2}.

\begin{thm}[\cite{Hitzer2}]\label{n=7}
   Let $U\in\cl_{p,q}$ with $p+q=7$. Then the determinant of the element $U$ can be calculated by the formula
    \begin{eqnarray}
       {\rm Det}(U)&=&\phi^{-1}(By)\widehat{\phi^{-1}(By)}\in\cl_{p,q}^0,\quad
       W:=B\widetilde{B},\quad
       B:=\phi(U),\nonumber\nonumber\\
       y&:=&\widetilde{B}\left(
       \frac{1}{3}Wm_{1,4,5}(WW)+
       \frac{2}{3}m_4(m_4(W)m_{1,4,5}(m_4(W)m_4(W)))
       \right),\nonumber
    \end{eqnarray}
    where $\phi$ is an isomorphism
    \begin{eqnarray}
        \phi: \cl_{p,q}\xrightarrow{\sim}
        \begin{cases}
            \mathbb{C}\otimes\cl_{p',q'},\quad\text{if $q'$ is even,}\\
            \mathbb{S}\otimes\cl_{p',q'},\quad\text{else,}
        \end{cases}\quad
       p'+q'=6.\label{phi}
    \end{eqnarray}
\end{thm}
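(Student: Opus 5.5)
Since $n=7$ is odd, the pseudoscalar $e_{1\cdots7}$ lies in the center of $\cl_{p,q}$, and $I^2=(-1)^{q}\cdot(-1)^{21}\cdot\ldots$, i.e. $I^2=\pm1$. I will build $\phi$ from this element. Choose six generators, say $e_1,\ldots,e_6$, and put $I:=e_{1\cdots 7}$. Then $\cl_{p,q}=\cl_{p',q'}\oplus I\,\cl_{p',q'}$, where $\cl_{p',q'}$ is the subalgebra generated by $e_1,\ldots,e_6$ and $p'+q'=6$. The map $\phi$ sends $I$ to $i$ or $j$, depending on whether $I^2=-1$ or $I^2=+1$. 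This sign, computed from $q$ and $q'$ (i.e. from whether $e_7^2=\pm1$), matches the case split in (\ref{phi}). So $\phi$ is an explicit algebra isomorphism onto $\mathbb{C}\otimes\cl_{p',q'}$ or $\mathbb{S}\otimes\cl_{p',q'}$.

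The key step is to use the $n=6$ case of Theorem~\ref{nleq7}, extended by scalars. That formula is a polynomial identity $BF(B)={\rm Det}(B)\in\cl^0$, which holds over any commutative coefficient ring. Hence for $B\in\mathbb{C}\otimes\cl_{p',q'}$ (or $\mathbb{S}\otimes\cl_{p',q'}$) the element $By$ is a "scalar" in $\mathbb{C}$ (resp. $\mathbb{S}$).

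I then need to rewrite the $n=6$ formula in the form stated, with $W=B\widetilde B$. The hat and tilde operations are taken in $\cl_{p',q'}$, acting trivially on the coefficient $i$ or $j$. I will simplify $\widehat B\widehat{\widetilde B}=\widehat W$ and use the identity $m_{4,5,6}(\widehat X)=m_{1,4,5}(X)$ on the relevant elements, adjusting $m_4$ similarly. This restricts which grades occur in $W$ and $WW$, since these are reversion-symmetric, so only grades $0,1,4,5$ survive. This bookkeeping is the tedious part, but it is mechanical. It should give $By={\rm Det}_6(B)$, the determinant in the $6$-dimensional algebra over $\mathbb{C}$ or $\mathbb{S}$.

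Next I relate the determinants. ${\rm Det}_6(B)$ is the determinant of an $8\times8$ matrix over $\mathbb{C}$ or $\mathbb{S}$. The full $7$-dimensional determinant uses $N=2^{4}=16$, i.e. the realification. For complex matrices this is $\det_{\mathbb{R}}=|\det_{\mathbb{C}}|^2=z\bar z$. For split-complex matrices, $\mathbb{S}\cong\mathbb{R}\oplus\mathbb{R}$, so the matrix is a pair $(A_1,A_2)$ and the real determinant is $\det A_1\det A_2=z\bar z$ with $\bar{(x+jy)}=x-jy$.

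Under $\phi^{-1}$, the conjugation $i\mapsto -i$ (or $j\mapsto-j$) becomes $I\mapsto -I$. Grade involution acts as exactly this on $\mathbb{R}+\mathbb{R}I$, because grade $7$ is odd. This gives ${\rm Det}(U)=\phi^{-1}(By)\widehat{\phi^{-1}(By)}$.

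The main obstacle is the justification that Definition~\ref{def:det}'s ${\rm Det}$ in $\cl_{p,q}$, $n=7$, equals this realified product. I would handle it via the uniqueness of the characteristic-polynomial-based definition: the Faddeev–LeVerrier recursion with $N=16$ computes the determinant of the left-regular-type representation of degree $16$. That representation decomposes, after complexification, into the two $8$-dimensional irreducibles corresponding to $I\mapsto\pm\sqrt{I^2}$, whose determinants are $z$ and $\bar z$.
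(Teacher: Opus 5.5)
The paper gives no proof of this statement; it quotes it from Hitzer--Sangwine, so your proposal has to stand on its own. Your overall architecture is sound:
\begin{itemize}
\item polynomial identities extend to $\mathbb{C}$ or $\mathbb{S}$ coefficients;
\item ${\rm Det}(U)=z\bar z$ via the two $8$-dimensional irreducibles;
\item $z\mapsto\bar z$ is $\widehat{\ }$ on $\mathbb{R}\oplus\mathbb{R}e_{1\cdots7}$.
\end{itemize}
It also works for any isomorphism $\phi$ of the stated type. Your $\phi$, with $e_1,\dots,e_6$ as generators, differs from Definition~\ref{def:phi}, which uses $e_{1\,m+1}$, but nothing depends on that.

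The gap is the step you call mechanical bookkeeping: that $By={\rm Det}_6(B)$ for the specific $y$ of the statement. Writing Theorem~\ref{nleq7} for $n=6$ with $W=B\widetilde B$ and $\widehat B\widehat{\widetilde B}=\widehat W$ gives
\[
{\rm Det}_6(B)=W\Big(\frac13\widehat W\,m_{4,5,6}(\widehat W W)+\frac23 m_{4,5,6}\big(m_{4,5,6}(\widehat W)\,m_{4,5,6}(m_{4,5,6}(\widehat W)\,m_{4,5,6}(W))\big)\Big),
\]
whereas the statement requires $W\big(\frac13 W m_{1,4,5}(WW)+\frac23 m_4(m_4(W)m_{1,4,5}(m_4(W)m_4(W)))\big)$.

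Your identity $m_{4,5,6}(\widehat X)=m_{1,4,5}(X)$ is false. In fact $m_{4,5,6}\circ\widehat{\ }=m_{1,3,4,6}$, which on reversion-symmetric $X$ equals $m_{1,4}$. This differs from $m_{1,4,5}$ by the sign of $\langle X\rangle_5$: for $B=1+e_{12345}\in\cl_{6,0}$ you get $W=2+2e_{12345}$, where the two sides disagree.

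Two further mismatches remain. The left factors are $\widehat W$ and $m_{4,5,6}(\widehat W)$ rather than $W$ and $m_4(W)$. And $\widehat W W$ is not reversion-symmetric (its reverse is $W\widehat W$), so your restriction to grades $\{0,1,4,5\}$ does not apply to it.

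No relabeling of grades turns one bracket into the other. What you actually need is the nontrivial polynomial identity that the two brackets agree after left multiplication by $W$, i.e.\ that the bracket in $y$ is itself an $n=6$ determinant formula. Either take the $n=6$ formula in exactly this form as your input (this is what Hitzer--Sangwine use, and then your scalar-extension argument goes through verbatim), or prove the identity, e.g.\ by symbolic verification in every signature with $p'+q'=6$.

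Two smaller points. First, Definition~\ref{def:det} with $N=16$ computes the determinant of a $16$-dimensional representation $\rho$ with ${\rm tr}\,\rho(X)=16\langle X\rangle_0$. It is not a left-regular one, which would have degree $128$ and determinant ${\rm Det}(U)^8$. The vanishing of ${\rm tr}\,\rho(e_{1\cdots7})$ is what forces $\rho\cong\rho_+\oplus\rho_-$ with each irreducible exactly once, hence $z\bar z$ rather than $z^2$ or $\bar z^2$; you should say this explicitly.

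Second, with $e_1,\dots,e_6$ generating $\cl_{p',q'}$, one has $e_{1\cdots7}^2=-1$ iff $q$ is even. This matches ``$q'$ even'' only when the omitted generator squares to $+1$, so the criterion that actually matters is the sign of $e_{1\cdots7}^2$.
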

The explicit formula for the determinant presented above not only characterizes invertibility, but also provides a direct method for computing the inverse element in the algebra. In particular, if ${\rm Det}(U)\neq 0$, the inverse can be written as
$$
   U^{-1} = \frac{\phi^{-1}(y)\widehat{\phi^{-1}(By)}}{{\rm Det}(U)},
$$
where all terms are defined as in the theorem above.

\section{Explicit Formula of Determinant for $n=7$}
In this section we consider the determinant formula for elements of geometric algebra in the case $n=7$, independent of the isomorphism~(\ref{phi}).  First, however, we need to explicitly construct such an isomorphism $\phi$. 
\begin{definition}
\label{def:phi}
    Let $\{e_1,\ldots,e_7\}$ be the generators of $\cl_{p,q}$ with $p+q=7$ and $\{E_1,\ldots,E_6\}$ be the generators of $\cl_{p',q'}$ with $p'+q'=6$. Let us define the isomorphism 
    \begin{eqnarray}    
        &&\phi:\cl_{p,q}\xrightarrow{\sim}
        \begin{cases}
            \mathbb{C}\otimes\cl_{p',q'},\quad \text{if $q'$ is even},\nonumber\\
            \mathbb{S}\otimes\cl_{p',q'},\quad \text{else},
        \end{cases}   \nonumber\\
        &&\phi(e_{1~m+1}):=1\otimes E_m,\quad m=1,\ldots,6,\nonumber\\
        &&\phi(e_{1234567}):=r\otimes E,\quad r=\begin{cases}
            i\in\mathbb{C},\quad \text{if $q'$ is even},\\
            j\in\mathbb{S},\quad \text{else},
        \end{cases}\nonumber
    \end{eqnarray}
    where $E$ is the identity element of the algebra $\cl_{p',q'}$.
    \end{definition}
    \vspace{-0.5cm}
\begin{table}
\begin{center}
    \begin{tabular}{ |l|l|c|c| } 
     \hline
      $U\in\cl_{p,q}$ & $\pm\phi(U)$ & grade of $U$ & grade of $\phi(U)$ \\\hline\hline
     
     $e$ & $1\otimes e$ & 0 & 0 \\\hline 
     $e_{1\ i+1}$ & $1\otimes E_i$ & 2 & 1 \\\hline 
     $e_{i+1\ j+1}$ & $1\otimes E_{ij}$ & 2 & 2 \\ \hline
     $e_{1\ i+1\ j+1\ k+1}$ & $1\otimes E_{ijk}$ & 4 & 3 \\ \hline
     $e_{i+1\ j+1\ k+1\ l+1}$ & $1\otimes E_{ijkl}$ & 4 & 4 \\ \hline
     $e_{1\ i+1\ j+1\ k+1\ l+1\ m+1}$ & $1\otimes E_{ijklm}$ & 6 & 5 \\ \hline
     $e_{234567}$ & $1\otimes E_{123456}$ & 6 & 6 \\ \hline
     $e_{1234567}$ & $r\otimes e$ & 7 & 0 \\ \hline
     $e_{1}$ & $r\otimes E_{123456}$ & 1 & 6 \\ \hline
     $e_{234567}e_{i+1\ j+1\ k+1\ l+1\ m+1}$ & $r\otimes E_{ijklm}$ & 1 & 5 \\ \hline
     $e_{1234567}e_{i+1\ j+1\ k+1\ l+1}$ & $r\otimes E_{ijkl}$ & 3 & 4 \\ \hline
     $e_{234567}e_{i+1\ j+1\ k+1}$ & $r\otimes E_{ijk}$ & 3 & 3 \\ \hline
     $e_{1234567}e_{i+1\ j+1}$ & $r\otimes E_{ij}$ & 5 & 2 \\ \hline
     $e_{234567}e_{i+1}$ & $r\otimes E_{i}$ & 5 & 1 \\ \hline
    \end{tabular}
    \caption{Mapping of grades under the isomorphism~$\phi$ (Definition~\ref{def:phi}). Here~$i,j,k,l,m~\in~\{ 1,\ldots,6\}$.}
    \label{tab}
\end{center}
\end{table}
\begin{lemma}\label{cor:m}
Let $U\in\cl_{p,q},p+q=7$ and let $\phi$ be the isomorphism from Definition~\ref{def:phi}. Then the following identities hold
\begin{eqnarray}
        \phi(m_{0,7}(U))&=&m_{0}(\phi(U)),\quad
        \phi(m_{2,5}(U))=m_{1,2}(\phi(U)),\nonumber\\
        \phi(m_{3,4}(U))&=&m_{3,4}(\phi(U)),\quad
        \phi(m_{1,6}(U))=m_{5,6}(\phi(U)).\nonumber
    \end{eqnarray}
\end{lemma}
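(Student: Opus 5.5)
Since every $m_M$ and $\phi$ are linear, it suffices to check each identity on basis elements $e_{a_1\cdots a_k}\in\mathbb{B}_{p,q}$. For such an element $m_M$ acts as multiplication by $-1$ or $+1$ according to whether $k\in M$. The claim therefore reduces to a statement about grades: for each basis element $U$ of grade $k$, the image $\phi(U)$ must lie (up to sign) in $r^\epsilon\otimes\cl^{k'}_{p',q'}$ for a single grade $k'$. Moreover, $k$ must lie in the left-hand index set exactly when $k'$ lies in the right-hand index set.

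First I will justify Table~\ref{tab}, which gives exactly this grade correspondence. Multiplicativity of $\phi$ and the definition $\phi(e_{1\,m+1})=1\otimes E_m$ give
\[
\phi(e_{a+1\,b+1})=\phi(e_{1\,a+1}e_{1\,b+1})\eta_{11}^{-1}=\pm1\otimes E_{ab}.
\]
Similarly, products of generators $e_{m+1}$ ($m=1,\dots,6$) pair up into $E$'s. An even element with index $1$ absent has even grade $2s$ and maps to grade $2s$. An even element containing index $1$, of grade $2s$, maps to grade $2s-1$. The odd elements are obtained by multiplying even ones by $e_{1234567}\mapsto r\otimes E$. Such a product complements the index set within $\{1,\dots,7\}$, which is how the rows of the lower half of the table arise. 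Up to sign, this reproduces each row of Table~\ref{tab}, covering all $2^7$ basis elements.

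Next I will tabulate the induced map on grades (with grade $0$ kept for $m_0$):
\begin{itemize}
\item $0\mapsto 0$ and $7\mapsto 0$;
\item $2\mapsto\{1,2\}$ and $5\mapsto\{1,2\}$;
\item $4\mapsto\{3,4\}$ and $3\mapsto\{3,4\}$;
\item $6\mapsto\{5,6\}$ and $1\mapsto\{5,6\}$.
\end{itemize}
The preimage sets are then $\phi^{-1}(\text{grade }0)=\{0,7\}$, $\phi^{-1}(\{1,2\})=\{2,5\}$, $\phi^{-1}(\{3,4\})=\{3,4\}$ and $\phi^{-1}(\{5,6\})=\{1,6\}$. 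Here "grade $k'$" means the $\cl_{p',q'}$-grade of the second tensor factor, with the $r$-coefficient treated as a scalar. Hence $\phi\circ m_M$ and $m_{M'}\circ\phi$ agree on every basis element. For the first identity, the left side flips grades $0$ and $7$, which are exactly the elements mapping to $\cl^0$ components, namely $1\otimes e$ and $r\otimes e$. So the right side is $m_0$ acting on $\phi(U)\in\mathbb{C}\otimes\cl_{p',q'}$ (or $\mathbb{S}\otimes\cl_{p',q'}$), with grade projection taken coefficientwise in the first factor. The other three identities follow the same way.

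The main obstacle is the bookkeeping behind the table. One must check that each grade $k$ of $\cl_{p,q}$ maps entirely into the claimed grades, in particular for the odd rows obtained via $e_{1234567}$. I will handle this with one index-counting argument. For a basis element $e_A$ of $\cl_{p,q}$, let $A'=A\setminus\{1\}$ shifted down by one. If $|A|$ is even, $\phi(e_A)=\pm 1\otimes E_{A'}$. If $|A|$ is odd, write $e_A=\pm e_{1234567}e_{A^c}$, so that $\phi(e_A)=\pm r\otimes E_{(A^c)'}$. The grade of $E_{(A^c)'}$ equals $7-|A|$ minus $1$ if $1\in A^c$, and equals $7-|A|$ otherwise. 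This formula gives the grade list above directly. Signs never matter, since only membership of grades is used.
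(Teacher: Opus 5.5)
Your proposal is correct and follows the same route as the paper: reduce by linearity to basis elements and read off from Table~\ref{tab} which grades of $\cl_{p,q}$ land in which grades of $\cl_{p',q'}$, with your index-counting formula for the grade of $\phi(e_A)$ supplying the justification of the table that the paper takes for granted. One harmless slip: in fact $e_{1\,a+1}e_{1\,b+1}=-\eta_{11}e_{a+1\,b+1}$, so your displayed identity for $\phi(e_{a+1\,b+1})$ is off by a sign, which, as you note, does not affect the grade bookkeeping.
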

\begin{proof}
The Lemma follows from analyzing how the isomorphism $\phi$ from Definition~\ref{def:phi} acts on basis elements of $\cl_{p,q}$ of various grades. The identities are verified for all basis elements of $\cl_{p,q}$ listed in Table~\ref{tab}, which provides a complete description of the mapping $\phi$ on basis elements. 
\end{proof}
Note that the identities provided above do not extend to $m_{4}(\phi(U))$ and $m_{1,4,5}(\phi(U))$, which appear in the determinant formula in Theorem~\ref{n=7}. In particular, there is no subset $M\subseteq \{0, \ldots, 6\}$ for which the identities $
\phi(m_M(U)) = m_{M'}(\phi(U))
$ hold in the cases $M'=\{4\}$ and $M'=\{1, 4, 5\}$. This limitation motivates the introduction of the following generalization of conjugation operation.
\begin{definition}\label{def:bconj}
    For a subset of basis elements $B\subseteq \mathbb{B}_{p,q}$ (see~(\ref{clbasis})), we define the associated basis conjugation map $b_{B}: \cl_{p,q}\rightarrow\cl_{p,q}$
    $$
        b_{B}(U)=U-2\sum_{i\in B}[U]_i,
    $$
where $[U]_i$ is the projection onto the basis element $i$.
\end{definition}

\begin{example}
   For the element $U=e+3e_1+e_7+e_{17}+e_{1234567}\in\cl_{7,0}$ with generators $\{e_1,\ldots,e_7\}$ and a subset of basis elements $B=\{e_1,e_{1234567}\}$, we have
    \begin{eqnarray}
        &&b_{e_1,e_{1234567}}(U)=U-2[ U ]_{e_1}-2[ U ]_{e_{1234567}}=e+3e_1+e_7+e_{17}+e_{1234567}\nonumber\\
        &&-6e_1-2e_{1234567}=e-3e_1+e_7+e_{17}-e_{1234567}.\nonumber
    \end{eqnarray}
\end{example}
Note that the basis conjugation operations are a generalization of the conjugation operations and have the following properties.
\begin{lemma}
For $U,W\in\cl_{p,q}$, $\alpha, \beta \in \mathbb{R}$, $B,C\subseteq {\mathbb B}_{p,q}$ and $M\subseteq {\{1,\ldots,n\}}$, we have
\begin{eqnarray}
    &&
    \hspace{-1cm}
    b_B(\alpha U+\beta W)=\alpha b_B(U)+ \beta b_B(W),\quad
    b_B(b_B(U))=U,\\
    &&
    \hspace{-1cm}
    b_{B}(b_{C}(U))=b_{C}(b_{B}(U))=b_{B\bigtriangleup C}(U),\quad
    m_{M}(b_{B}(U))=b_{B}(m_{M}(U)),\label{propbconj}
\end{eqnarray}
where $\bigtriangleup$ denotes the symmetric difference of sets, $b_B,b_C$ are arbitrary basis conjugation operations, $m_M$ is an arbitrary conjugation operation.
\end{lemma}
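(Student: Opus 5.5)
The plan is to reduce every identity to a statement about coefficients in the standard basis $\mathbb{B}_{p,q}$. Write $U=\sum_{I\in\mathbb{B}_{p,q}} u_I\, I$ with $u_I=[U]_I$ the coordinate of $U$ along the basis element $I$. By Definition~\ref{def:bconj},
$$
b_B(U)=\sum_{I\in\mathbb{B}_{p,q}} \varepsilon_B(I)\,u_I\,I,\qquad \varepsilon_B(I)=\begin{cases}-1,& I\in B,\\ 1,& I\notin B.\end{cases}
$$
So $b_B$ is the diagonal linear operator in the basis $\mathbb{B}_{p,q}$ with diagonal entries $\varepsilon_B(I)=\pm1$. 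Similarly, $m_M$ is diagonal in the same basis, with entry $-1$ exactly on the basis elements whose grade lies in $M$. All the identities are then statements about diagonal $\pm1$ matrices.

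First I check linearity. The coordinate functionals $U\mapsto [U]_I$ are linear, because coordinates in a fixed basis depend linearly on the vector. Hence $b_B(\alpha U+\beta W)=\alpha b_B(U)+\beta b_B(W)$, and likewise $m_M$ is linear, as noted after~(\ref{proj}).

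Second, I treat composition. Composing two diagonal operators multiplies their entries, so $b_B\circ b_C$ has entry $\varepsilon_B(I)\varepsilon_C(I)$ on $I$. This product equals $-1$ exactly when $I$ lies in precisely one of $B$ and $C$, that is, when $I\in B\bigtriangle C$. Therefore $b_B\circ b_C=b_{B\bigtriangle C}$. Since the symmetric difference is commutative, this operator also equals $b_C\circ b_B$. Taking $C=B$ gives $B\bigtriangle B=\emptyset$ and $b_\emptyset=\mathrm{id}$, which proves the involution property $b_B(b_B(U))=U$.

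Third, I prove commutation with $m_M$. Note that $m_M=b_{B_M}$, where $B_M\subseteq\mathbb{B}_{p,q}$ is the set of basis elements whose grade lies in $M$. This holds because $\langle U\rangle_k=\sum_{|I|=k}[U]_I\,I$, so the grade projection is a sum of basis projections. The last identity then follows from the composition rule already proved: $m_M\circ b_B=b_{B_M\bigtriangle B}=b_B\circ m_M$.

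I expect no genuine obstacle. The only point needing care is to state explicitly that $[U]_I$ denotes the coefficient of $I$ times $I$ itself, so that the expansions above are legitimate, and that $\mathbb{B}_{p,q}$ is a basis, so the coordinates are unique.
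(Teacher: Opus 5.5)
Your proof is correct. Representing $b_B$ and $m_M=b_{B_M}$ as diagonal $\pm1$ operators in the basis $\mathbb{B}_{p,q}$ gives linearity, $b_B\circ b_C=b_{B\bigtriangleup C}$, the involution property and commutation with $m_M$ directly. The paper states this lemma without proof, treating it as immediate from Definition~\ref{def:bconj}, and your coordinatewise check is exactly that omitted routine verification; it works unchanged when $M$ contains $0$, as in the paper's later use of $m_{0,7}$ and $m_0$.
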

This broader family of operations allows the conjugation operation to be transferred through an isomorphism $\phi$ introduced in Definition~\ref{def:phi}.
\begin{lemma}\label{lem:b}
    For $ U\in\cl_{p,q}, p+q=7$, we have
    $$
    m_s(\phi(U)) = \phi(b_{\underline{s}}(U)),\qquad s=1,2,\ldots,6,
    $$
    where 
    {
    \begin{eqnarray}
        \underline{0}&=&
        \{e,e_{1234567}\},\nonumber\\
        \underline{1}&=&
        \{e_{12},e_{13},e_{14},e_{15},e_{16},e_{17},e_{34567},e_{24567},e_{23567},e_{23467},e_{23457},e_{23456}\},\nonumber\\
        \underline{2}&=&\{e_{23},e_{24},e_{25},e_{26},e_{27},
                        e_{34},e_{35},e_{36},e_{37},
                        e_{45},e_{46},e_{47},
                        e_{56},e_{57},
                        e_{67},\nonumber\\
                        &&e_{14567},e_{13567},e_{13467},e_{13457},e_{13456},
                        e_{12567},e_{12467},e_{12457},e_{12456},\nonumber\\
                        &&
                        e_{12367},e_{12357},e_{12356},
                        e_{12347},e_{12346},
                        e_{12345}
                        \},\nonumber\\
        \underline{3}&=&\{e_{1234},e_{1235},e_{1236},e_{1237},e_{1245},e_{1246},
                        e_{1247},e_{1256},e_{1257},e_{1267},\nonumber\\
                       &&e_{1345},e_{1346},e_{1347},e_{1356},e_{1357},e_{1367},e_{1456},e_{1457},
                       e_{1467},e_{1567},\label{ubdeline}\\
                       &&e_{567},e_{467},e_{457},e_{456},e_{367},e_{357},e_{356},e_{347},e_{346},e_{345},e_{267},e_{257},\nonumber\\
                       &&e_{256},e_{247},e_{246},e_{245},e_{237},e_{236},e_{235},e_{234}
    \},\nonumber\\
        \underline{4}&=&\{e_{2345},e_{2346},e_{2356},e_{2456},e_{3456},
                        e_{2347},e_{2357},e_{2457},e_{3457},
                        e_{2367},\nonumber\\
                       &&e_{2467},e_{3467},e_{2567},e_{3567},
                        e_{4567},
                        e_{167},e_{157},e_{147},e_{137},e_{127},
                        e_{156},\nonumber\\
                       &&e_{146},e_{136},e_{126},e_{145},e_{135},e_{125},e_{134},e_{124},e_{123}
                        \},\nonumber\\
        \underline{5}&=&\{e_2,e_3,e_4,e_5,e_6,e_7,e_{134567},e_{124567},e_{123567},e_{123467},e_{123457},e_{123456}\},\nonumber\\
        \underline{6}&=&\{e_1,e_{234567}\}.\nonumber
    \end{eqnarray}     }
\end{lemma}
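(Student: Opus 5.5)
The plan is to reduce the identity to a statement about basis elements and then read it off Table~\ref{tab}. Both sides of $m_s(\phi(U))=\phi(b_{\underline{s}}(U))$ are $\mathbb{R}$-linear in $U$: $\phi$ is an algebra isomorphism, $m_s$ is linear by (\ref{proj}), and $b_{\underline{s}}$ is linear by (\ref{propbconj}). It therefore suffices to check the identity for $U$ running over the standard basis $\mathbb{B}_{p,q}$ from (\ref{clbasis}).

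For a basis element $e_A$, the definition of $b_B$ gives $b_{\underline{s}}(e_A)=-e_A$ if $e_A\in\underline{s}$ and $b_{\underline{s}}(e_A)=e_A$ otherwise. Likewise $m_s(\phi(e_A))=-\phi(e_A)$ exactly when $\phi(e_A)$ is homogeneous of grade $s$ in $\cl_{p',q'}$ (with coefficient in $\mathbb{C}$ or $\mathbb{S}$), and $m_s(\phi(e_A))=\phi(e_A)$ otherwise. So the lemma is equivalent to the claim
$$\underline{s}=\{e_A\in\mathbb{B}_{p,q} : \phi(e_A)\in\{1,r\}\otimes\cl^s_{p',q'}\ \text{up to sign}\}.$$
In particular we need that every $\phi(e_A)$ is, up to sign, of the form $1\otimes E_J$ or $r\otimes E_J$ for a single multi-index $J$.

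To establish that, I would compute $\phi$ on generators. From Definition~\ref{def:phi}, $e_{m+1}=\eta_{11}\,e_1e_{1\,m+1}$. Writing $e_1=\pm e_{1234567}e_{234567}$ and expressing $e_{234567}$ as a product of the $e_{1\,m+1}$, one gets $\phi(e_1)=\pm r\otimes E_{123456}$. Hence $\phi(e_{m+1})=\pm r\otimes E_{123456}E_m$, which is $\pm r$ times a grade-5 basis element. A product $e_{a_1}\cdots e_{a_k}$ then maps to $\pm$ a power of $r$ times a product of $E$'s, which reduces to a single basis blade $E_J$. Counting how many times $e_1$ occurs and how many indices from $\{2,\ldots,7\}$ occur determines $|J|$, reproducing Table~\ref{tab}: if $e_1\notin A$, the grade is $|A|$ or $6-|A|$ according to the parity of $|A|$, and if $e_1\in A$ it is similarly shifted. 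Equivalently, multiplying by the pseudoscalar $e_{1234567}\mapsto r\otimes E$ pairs $e_A$ with its complement without changing the $\cl_{p',q'}$-grade. The grade-$s$ preimages are then the blades listed in Table~\ref{tab} for grade $s$, and these coincide with the sets $\underline{s}$ in (\ref{ubdeline}). As a sanity check, the cardinalities are $2,12,30,40,30,12,2$, which is twice $\binom{6}{s}$ and sums to $128$, so every basis element is covered exactly once.

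The main obstacle is bookkeeping rather than theory. First, the signs and the factor $r$ must not affect the grade: since $\phi(e_A)$ is always $\pm1\otimes E_J$ or $\pm r\otimes E_J$ and $m_s$ acts only on the $\cl_{p',q'}$ factor, signs are irrelevant. Second, one must confirm that the enumerated lists in (\ref{ubdeline}) match the rule exactly. I would do this by the general parity rule above, checking one representative of each row of Table~\ref{tab} symbolically, and then noting that the lists are exactly the orbits under permuting indices $2,\ldots,7$ together with complementation.
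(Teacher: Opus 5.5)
Your proposal is correct and follows essentially the same route as the paper. You reduce by linearity to basis elements $e_A$, on which $b_{\underline{s}}$ and $m_s\circ\phi$ each act by a sign, and identify $\underline{s}$ as the blades whose $\phi$-images have grade $s$ according to Table~\ref{tab}. Your extra derivation of the grade rule from $\phi(e_1)=\pm r\otimes E_{123456}$ and $\phi(e_{m+1})=\pm r\otimes E_{123456}E_m$ checks out, including the counts $2,12,30,40,30,12,2$, and it justifies the table that the paper simply takes as given.
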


\begin{proof}
Table~\ref{tab} lists a all basis elements of $\cl_{p,q}, p+q=7$, parametrised by $i,j,k,l,m\in\{1,\dots,6\}$ with their images under $\phi$ (up to an overall sign) and the grades of those images.  Grouping the rows of Table~\ref{tab} according to the grade of $\phi(U)$ shows that the sets $\underline{1},\dots,\underline{6}$ are precisely the families of elements whose images have grades $1,\dots,6$. Thus, for every basis element $e_A\in\mathbb{B}_{p,q}$, we have
$$
m_s(\phi(e_A)) = 
\begin{cases}
-\phi(e_A), & \text{if } e_A\in\underline{s},\\
\phi(e_A), & \text{else}.
\end{cases}
$$
Let us have an arbitrary element $U\in\cl_{p,q}$. Let us expand the element in terms of the basis $U=\sum_{A\in{\mathbb B}_{p,q}}u_A e_A$. By linearity of $\phi$ and $m_s$, we get
\begin{align*}
m_s(\phi(U)) 
&
   = \sum_{A\in{\mathbb B}_{p,q}}u_A\,m_s(\phi(e_A)) 
   = \sum_{e_A\notin\underline{s}}u_A\,\phi(e_A)-\sum_{e_A\in\underline{s}}u_A\,\phi(e_A)\\
   &= \sum_{A\in{\mathbb B}_{p,q}}u_A\phi(b_{\underline{s}}(e_A))
   = \sum_{A\in{\mathbb B}_{p,q}}\phi(b_{\underline{s}}(u_Ae_A))
   = \phi(b_{\underline{s}}(U)).
\end{align*}
\end{proof}

\begin{lemma}\label{lem:236}
    For $U,V\in\cl_{p,q}$, where $p+q=7$, we have
    \begin{eqnarray}
        b_{\underline{2},\underline{3},\underline{6}}(UV)&=&
        b_{\underline{2},\underline{3},\underline{6}}(V)b_{\underline{2},\underline{3},\underline{6}}(U),\label{b_236p}\\
        b_{\underline{1},\underline{3},\underline{5}}(UV)&=&
        b_{\underline{1},\underline{3},\underline{5}}(U)b_{\underline{1},\underline{3},\underline{5}}(V),\label{b_135p}\\
        b_{\underline{1},\underline{2},\underline{5},\underline{6}}(UV)&=&
        b_{\underline{1},\underline{2},\underline{5},\underline{6}}(V)b_{\underline{1},\underline{2},\underline{5},\underline{6}}(U).\label{b_1256p}
    \end{eqnarray}
    where $\underline{1},\underline{2},\underline{3},\underline{5},\underline{6}$ are defined in (\ref{ubdeline}).
\end{lemma}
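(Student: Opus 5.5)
Since $\phi$ is an algebra isomorphism, I will transfer the question to $\mathbb{C}\otimes\cl_{p',q'}$ (or $\mathbb{S}\otimes\cl_{p',q'}$), where the classical conjugations of a six-dimensional algebra are available. By Lemma~\ref{lem:b} and the composition property (\ref{propbconj}), the sets $\underline{0},\dots,\underline{6}$ are pairwise disjoint. Hence $b_{\underline{2},\underline{3},\underline{6}}=b_{\underline{2}}\circ b_{\underline{3}}\circ b_{\underline{6}}$. Applying Lemma~\ref{lem:b} three times then gives
$$
\phi\bigl(b_{\underline{2},\underline{3},\underline{6}}(U)\bigr)=m_{2,3,6}(\phi(U)),
$$
and similarly
$$
\phi\bigl(b_{\underline{1},\underline{3},\underline{5}}(U)\bigr)=m_{1,3,5}(\phi(U)),\qquad
\phi\bigl(b_{\underline{1},\underline{2},\underline{5},\underline{6}}(U)\bigr)=m_{1,2,5,6}(\phi(U)).
$$
Here $m_M$ on $\mathbb{C}\otimes\cl_{p',q'}$ acts on the $\cl_{p',q'}$ factor and is $\mathbb{C}$- (resp. $\mathbb{S}$-) linear. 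This holds because $r\otimes\cdot$ keeps the grade of the second factor, as Table~\ref{tab} shows.

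Next I identify these operations on $\cl_{p',q'}$ with $n'=6$. The reversion flips grades $k\equiv 2,3 \pmod 4$, so on $\cl_{p',q'}$ it equals $m_{2,3,6}$. The grade involution flips odd grades, so it equals $m_{1,3,5}$. The Clifford conjugation flips $k\equiv 1,2\pmod 4$, so it equals $m_{1,2,5,6}$. By (\ref{conj_prop}), reversion and Clifford conjugation are anti-automorphisms and grade involution is an automorphism of $\cl_{p',q'}$. Extending scalars, these properties persist on $\mathbb{C}\otimes\cl_{p',q'}$ and $\mathbb{S}\otimes\cl_{p',q'}$, since the scalar factor is commutative and is left untouched.

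Finally I pull back. For (\ref{b_236p}),
$$
\phi\bigl(b_{\underline{2},\underline{3},\underline{6}}(UV)\bigr)=\widetilde{\phi(U)\phi(V)}=\widetilde{\phi(V)}\,\widetilde{\phi(U)}
=\phi\bigl(b_{\underline{2},\underline{3},\underline{6}}(V)\,b_{\underline{2},\underline{3},\underline{6}}(U)\bigr),
$$
and injectivity of $\phi$ gives the claim. Identities (\ref{b_135p}) and (\ref{b_1256p}) follow in the same way.

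The main point to check carefully is the first step. One must verify that $m_s$ applied to $\phi(U)$ is well defined as a scalar-linear grade operation on the tensor product, with the $r\otimes E_A$ components having the grade of $E_A$. One must also confirm that the sign ambiguity $\pm\phi$ in Table~\ref{tab} does not matter. It does not, since $m_s$ only multiplies each basis image by $\pm1$ according to its grade. Both facts are implicit in the proof of Lemma~\ref{lem:b}, so I would simply invoke that lemma.
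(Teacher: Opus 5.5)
Your proposal is correct and follows essentially the paper's argument: you transport the question through $\phi$ via Lemma~\ref{lem:b}, identify $m_{2,3,6}$ and $m_{1,3,5}$ with the reversion and grade involution of the six-dimensional algebra, apply (\ref{conjug2p}), and pull back using injectivity of $\phi$. The only minor difference is that you obtain (\ref{b_1256p}) directly from the Clifford conjugation $m_{1,2,5,6}$, whereas the paper deduces it by composing (\ref{b_236p}) and (\ref{b_135p}) via $b_{\underline{1},\underline{2},\underline{5},\underline{6}}=b_{\underline{2},\underline{3},\underline{6}}\circ b_{\underline{1},\underline{3},\underline{5}}$.
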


\begin{proof}
    Using Lemma~\ref{lem:b}, Definition~\ref{def:phi} and properties~(\ref{conjug2p}), we get  
    \begin{eqnarray}
    \phi(b_{\underline{2},\underline{3},\underline{6}}(UV))&=&
    m_{2,3,6}(\phi(UV))=
    \widetilde{\phi(UV)}=     
    \widetilde{\phi(U)\phi(V)}=
    \widetilde{\phi(V)}\widetilde{\phi(U)}\nonumber\\
    &=& 
    m_{2,3,6}(\phi({V}))m_{2,3,6}({\phi(U)})
    =
    \phi(b_{\underline{2},\underline{3},\underline{6}}({V}))\phi(b_{\underline{2},\underline{3},\underline{6}}({U}))\nonumber
    \\
    &=&
    \phi(b_{\underline{2},\underline{3},\underline{6}}({V})b_{\underline{2},\underline{3},\underline{6}}({U}))
    =
    \phi(b_{\underline{2},\underline{3},\underline{6}}({V}{U})),\label{proof1}\\
    \phi(b_{\underline{1},\underline{3},\underline{5}}(UV))
    &=&
    m_{1,3,5}(\phi(UV))=
    \widehat{\phi(UV)}=     
    \widehat{\phi(U)\phi(V)}=
    \widehat{\phi(U)}\widehat{\phi(V)}\nonumber\\
    &=& 
    m_{1,3,5}(\phi({U}))m_{1,3,5}({\phi(V)})
    =
    \phi(b_{\underline{1},\underline{3},\underline{5}}({U}))\phi(b_{\underline{1},\underline{3},\underline{5}}({V}))\nonumber\\
    &=&  
    \phi(b_{\underline{1},\underline{3},\underline{5}}({U})b_{\underline{1},\underline{3},\underline{5}}({V}))
    =
    \phi(b_{\underline{1},\underline{3},\underline{5}}({U}{V})).\label{proof2}
    \end{eqnarray}    
    Applying $\phi^{-1}$ to both sides of (\ref{proof1}) and (\ref{proof2}) yields (\ref{b_236p}) and (\ref{b_135p}), respectively. The final identity (\ref{b_1256p}) follows from (\ref{b_236p}) and (\ref{b_135p}). 
\end{proof}
This result allows us to obtain the following reformulation of Theorem~\ref{n=7}, which does not depend on the isomorphism $\phi$, meaning all computations are made directly within the algebra itself.
\begin{thm}\label{n==7}
    Let $U\in\cl_{p,q}$, where $p+q=7$. Then the determinant of the element~$U$ can be calculated by the formula
    \begin{eqnarray}
       {\rm Det}(U)&=&Uy\widehat{Uy}\in\cl^0_{p,q},\label{th3det}\\
       y&:=& 
       \frac{1}{3}b_{\underline{2},\underline{3},\underline{6}}(U)W b_{\underline{1},\underline{4},\underline{5}}(WW)\nonumber\\
       &&+\frac{2}{3}b_{\underline{2},\underline{3},\underline{6}}(U)b_{\underline{4}}(b_{\underline{4}}(W)b_{\underline{1},\underline{4},\underline{5}}(b_{\underline{4}}(W)b_{\underline{4}}(W))),\label{th3y}\\
       W&:=&U b_{\underline{2},\underline{3},\underline{6}}(U),\nonumber
    \end{eqnarray}
    where sets $\underline{1},\underline{2},\underline{3},\underline{4},\underline{5},\underline{6}$ are defined in Lemma~\ref{lem:b}.
\end{thm}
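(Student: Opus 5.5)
The plan is to transport the formula of Theorem~\ref{n=7} back through $\phi$, replacing each operation on the image side by its counterpart inside $\cl_{p,q}$. Put $B:=\phi(U)$. By Lemma~\ref{lem:b} and the composition rule $b_Bb_C=b_{B\bigtriangleup C}$, applied to the disjoint sets $\underline{2},\underline{3},\underline{6}$, we have $\widetilde{B}=m_{2,3,6}(\phi(U))=\phi(b_{\underline{2},\underline{3},\underline{6}}(U))$. Since $\phi$ is multiplicative, this gives
\[
W_\phi:=B\widetilde B=\phi(U)\,\phi(b_{\underline{2},\underline{3},\underline{6}}(U))=\phi(W),
\]
where $W=Ub_{\underline{2},\underline{3},\underline{6}}(U)$ is the element in the statement.

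Next we translate every conjugation in the expression for $y$ of Theorem~\ref{n=7} in the same way: $m_4(\phi(X))=\phi(b_{\underline{4}}(X))$ and $m_{1,4,5}(\phi(X))=\phi(b_{\underline{1},\underline{4},\underline{5}}(X))$ for arbitrary $X\in\cl_{p,q}$. We then work from the inside out. First, $m_4(W_\phi)m_4(W_\phi)=\phi(b_{\underline4}(W)b_{\underline4}(W))$. Applying $m_{1,4,5}$ to it, multiplying by $m_4(W_\phi)$, applying $m_4$ again, and finally multiplying on the left by $\widetilde B$ gives
\[
\phi\Bigl(b_{\underline{2},\underline{3},\underline{6}}(U)\,b_{\underline4}\bigl(b_{\underline4}(W)\,b_{\underline{1},\underline{4},\underline{5}}(b_{\underline4}(W)b_{\underline4}(W))\bigr)\Bigr).
\]
The first summand is handled the same way: $\widetilde B\,W_\phi\,m_{1,4,5}(W_\phi W_\phi)=\phi\bigl(b_{\underline{2},\underline{3},\underline{6}}(U)\,W\,b_{\underline{1},\underline{4},\underline{5}}(WW)\bigr)$. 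Together with linearity of $\phi$, this shows that the $y$ of Theorem~\ref{n=7} equals $\phi(y')$, where $y'$ is the element (\ref{th3y}).

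It remains to rewrite the determinant itself. We have $By=\phi(U)\phi(y')=\phi(Uy')$, so $\phi^{-1}(By)=Uy'$, and Theorem~\ref{n=7} gives ${\rm Det}(U)=Uy'\,\widehat{Uy'}$. Here $\widehat{\ }$ is the grade involution of $\cl_{p,q}$ applied to $\phi^{-1}(By)$, so no further translation is needed and this is exactly (\ref{th3det}). The statement that the result lies in $\cl^0_{p,q}$ is inherited from Theorem~\ref{n=7}; alternatively it follows because the determinant is a scalar.

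The main obstacle is a consistency check: is the reversion $\widetilde{\ }$ in Theorem~\ref{n=7} the reversion of $\cl_{p',q'}$ extended to $\mathbb{C}\otimes\cl_{p',q'}$ (or $\mathbb{S}\otimes\cl_{p',q'}$) trivially on the scalar factor, or does it also conjugate $r$? Lemma~\ref{lem:b} and Table~\ref{tab} only describe the grading with respect to $\cl_{p',q'}$, and $m_s$ must act as the identity on $r$. I would first confirm that the grade projections in Theorem~\ref{n=7} are taken with respect to $\cl_{p',q'}$ only. I would then also check that the table's sign ambiguity ``$\pm\phi(U)$'' is harmless: it is, because each $m_s$ acts diagonally on basis images and so commutes with these signs.
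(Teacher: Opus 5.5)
Your proposal is correct and follows essentially the same route as the paper. Both transport the formula of Theorem~\ref{n=7} through $\phi$, replacing $\widetilde{~~}$, $m_4$ and $m_{1,4,5}$ on the image side by $b_{\underline{2},\underline{3},\underline{6}}$, $b_{\underline{4}}$ and $b_{\underline{1},\underline{4},\underline{5}}$ via Lemma~\ref{lem:b}, and use linearity and multiplicativity of $\phi$ to reach $\phi^{-1}(By)=Uy$; the paper pulls each piece back with $\overline{X}:=\phi^{-1}(X)$ where you push $W$ and $y$ forward, and it likewise assumes implicitly that the conjugations act only on the $\cl_{p',q'}$ factor and leave $r$ fixed.
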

\begin{proof}

Let $U\in\cl_{p,q}$ with $p+q=7$, and let $\phi$ be the isomorphism given in (\ref{phi}). For any $X\in\cl_{p',q'}$ with $p'+q'=6$, denote by $\overline{X}$ the result of applying the inverse isomorphism $\phi^{-1}$ to $X$, meaning $\overline{X}:=\phi^{-1}(X)$. By Theorem~\ref{n=7}, the determinant of $U$ can be written as
\begin{gather}
    {\rm Det}(U) = \overline{B} \overline{y}\widehat{\overline{B}\overline{y}},\quad 
    V=B\widetilde{B},\quad
    B=\phi(U),\label{n==7:insert1}\\
    \overline{y}=\overline{\widetilde{B}}\left(
    \frac{1}{3}\overline{V}\overline{m_{1,4,5}(VV)}+
    \frac{2}{3}\overline{m_4(m_4(V)m_{1,4,5}(m_4(V)m_4(V)))}
    \right)\label{n==7:insert2}
\end{gather}
We now simplify each part of the formula using Lemma~\ref{lem:b}.
\begin{eqnarray}
    \overline{\widetilde{B}}&=&
    \overline{m_{2,3,6}(\phi(U))}=
    \overline{\phi\left(b_{\underline{2},\underline{3},\underline{6}}(U)\right)}=b_{\underline{2},\underline{3},\underline{6}}(U)\label{n==7:t1}\\
    \overline{V}&=&
    \overline{B}\overline{\widetilde{B}}=
    \overline{\phi\left(U\right)}\,\overline{\phi\left(b_{\underline{2},\underline{3},\underline{6}}(U)\right)}=
    Ub_{\underline{2},\underline{3},\underline{6}}(U)=W\label{n==7:t2}
\end{eqnarray}
\begin{eqnarray}
    \overline{m_{1,4,5}(VV)}&=&
    \overline{m_{1,4,5}\left(\phi(\overline{VV})\right)}
    =
    \overline{\phi\left(b_{\underline{1},\underline{4},\underline{5}}(\overline{V\,V})\right)}\nonumber\\&=&
    b_{\underline{1},\underline{4},\underline{5}}(\overline{V}\,\overline{V})=
    b_{\underline{1},\underline{4},\underline{5}}(WW)\label{n==7:t3}
\end{eqnarray}

\begin{eqnarray}
    \overline{m_4(m_4(V)m_{1,4,5}(m_4(V)m_4(V)))}&=&
    \overline{m_4\left(\phi\left(\overline{m_4(V)}\overline{m_{1,4,5}(m_4(V)m_4(V)}\right)\right)}\nonumber\\
    &=&
    \overline{\phi\left(b_{\underline{4}}\left(\overline{m_4(V)}\,\overline{m_{1,4,5}(m_4(V)m_4(V))}\right)\right)}\nonumber\\
    &=&
    b_{\underline{4}}\left(\overline{m_4(V)}\,\overline{m_{1,4,5}(m_4(V)m_4(V))}\right)\label{n==7:t4}\\
    &=&
    b_{\underline{4}}\left(\overline{m_4\left(\phi\left(\overline{V}\right)\right)}\,
    \overline{m_{1,4,5}\left(\phi\left(\overline{m_4(V)m_4(V)}\right)\right)}\right)\nonumber\\
    &=&
    b_{\underline{4}}\left(b_{\underline{4}}\left(\overline{V}\right)\,
    b_{\underline{1},\underline{4},\underline{5}}\left(\overline{m_4(V)}\,\overline{m_4(V)}\right)\right)\nonumber\\
    &=&
    b_{\underline{4}}\left(b_{\underline{4}}\left(W\right)\,
    b_{\underline{1},\underline{4},\underline{5}}\left(b_{\underline{4}}\left(W\right)b_{\underline{4}}\left(W\right)\right)\right)\nonumber
\end{eqnarray}
Finally, inserting (\ref{n==7:t1}), (\ref{n==7:t2}), (\ref{n==7:t3}) and (\ref{n==7:t4}) into (\ref{n==7:insert1}) and( \ref{n==7:insert2}) yields the result.
\end{proof}
\begin{corollary}\label{cor:n==7}
    If ${\rm Det}(U)\neq 0$, then the inverse of $U\in\cl_{p,q}, p+q=7$ is given by
    $$
       U^{-1} = \frac{y\widehat{Uy}}{{\rm Det}(U)},
    $$
    where ${\rm Det}$ and $y$ are defined in (\ref{th3det}) and (\ref{th3y}), respectively. 
\end{corollary}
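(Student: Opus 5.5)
The corollary follows by exhibiting $y\widehat{Uy}/{\rm Det}(U)$ as a right inverse of $U$ and then using finite-dimensionality to get a two-sided inverse. By Theorem~\ref{n==7}, the element $Uy\widehat{Uy}$ equals the scalar ${\rm Det}(U)$. Using associativity of the geometric product, we can regroup it as
$$
U\bigl(y\widehat{Uy}\bigr)={\rm Det}(U).
$$
Hence, if ${\rm Det}(U)\neq 0$, we may divide by this nonzero real scalar, which commutes with everything. Setting $X:=y\widehat{Uy}/{\rm Det}(U)$, this gives $UX=e$, so $X$ is a right inverse of $U$.

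Next we show that a right inverse in $\cl_{p,q}$ is automatically a two-sided inverse. The algebra $\cl_{p,q}$ is a finite-dimensional associative unital algebra over $\mathbb{R}$. Consider the left multiplication map $L_U:Z\mapsto UZ$, which is linear on $\cl_{p,q}$. Since $UX=e$, the map $L_U\circ L_X$ is the identity, so $L_U$ is surjective. A surjective linear endomorphism of a finite-dimensional space is bijective, so $L_U$ is injective as well. Now
$$
U(XU)=(UX)U=U=U\cdot e,
$$
and injectivity of $L_U$ gives $XU=e$. Therefore $X=U^{-1}$, and the inverse is unique by associativity.

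As an alternative route, one could transport the statement through $\phi$. Theorem~\ref{n=7} comes with the inverse $\phi^{-1}(y)\widehat{\phi^{-1}(By)}/{\rm Det}(U)$, and the identifications (\ref{n==7:t1})--(\ref{n==7:t4}) show that $\phi^{-1}$ of the Hitzer--Sangwine $y$ is exactly our $y$. The argument above avoids this translation entirely, so it is the one I use.

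The only point needing care is to apply Theorem~\ref{n==7} in the grouping $U\cdot(y\widehat{Uy})$ rather than $(Uy)\widehat{Uy}$, and then to pass from a one-sided to a two-sided inverse. The latter is exactly the finite-dimensionality argument given above, so there is no real obstacle.
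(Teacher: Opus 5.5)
Your proof is correct and follows the paper's implicit route: the paper gives no separate proof and reads the corollary off Theorem~\ref{n==7} by regrouping ${\rm Det}(U)=U\bigl(y\widehat{Uy}\bigr)$ and dividing by the nonzero scalar. Your extra step, upgrading the right inverse to a two-sided one via injectivity of $Z\mapsto UZ$ on the finite-dimensional algebra, is left unstated in the paper, and you handle it correctly.
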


\begin{example}
    We illustrate the application of Corollary~\ref{cor:n==7} by computing the inverse\footnote{
One of the reviewers noted that for this specific \(U\), the minimal polynomial  has degree~3, so the determinant and inverse could be  found in only 3 multiplicative steps  (see \cite{Prodanov}). We follow the general corollary instead, to illustrate the systematic procedure.} of the multivector $U=e + e_{35} + e_{1234567}$. First, let us compute the auxiliary element
    \begin{align*}
        W &= U  b_{\underline{2},\underline{3},\underline{6}}(U) 
        = (e + e_{35} + e_{1234567})(e - e_{35} + e_{1234567})= e + 2e_{1234567}.
    \end{align*}
     Next, let us calculate intermediate quantities
    \begin{align*}
        WW&=  (e + 2e_{1234567})(e + 2e_{1234567}) = -3e + 4e_{1234567}, \\
        W  b_{\underline{1},\underline{4},\underline{5}}(WW) 
        &= (e + 2e_{1234567})(-3e + 4e_{1234567}) 
        = -11e - 2e_{1234567}. 
    \end{align*}
     Note that 
    \begin{align*}
        W &= b_{\underline{4}}(W), \quad
        W  b_{\underline{1},\underline{4},\underline{5}}(WW) 
        = b_{\underline{4}}(b_{\underline{4}}(W)b_{\underline{1},\underline{4},\underline{5}}(b_{\underline{4}}(W)b_{\underline{4}}(W)),\\
        y &= b_{\underline{2},\underline{3},\underline{6}}(U) \left( 
        \frac{1}{3} W  b_{\underline{1},\underline{4},\underline{5}}(WW) 
        + \frac{2}{3} b_{\underline{4}}\left( b_{\underline{4}}(W)  
        b_{\underline{1},\underline{4},\underline{5}}\left( b_{\underline{4}}(W)b_{\underline{4}}(W) \right) \right) \right) \\
        &= (e - e_{35} + e_{1234567})(-11e - 2e_{1234567})\\
        &= -9e + 11e_{35} + 2e_{12467} - 13e_{1234567}.
    \end{align*}
    Now, let us compute
    \begin{align*}
        Uy &= (e + e_{35} + e_{1234567})(-9e + 11e_{35} + 2e_{12467} - 13e_{1234567}) \\
        &= -7e - 24e_{1234567}, \\
        y  \widehat{Uy} &= (-9e + 11e_{35} + 2e_{12467} - 13e_{1234567})(-7e + 24e_{1234567}) \\
        &= 375e - 125e_{35} + 250e_{12467} - 125e_{1234567}.
    \end{align*}
    Thus, the determinant of $U$ is
    $$
        \operatorname{Det}(U) = Uy  \widehat{Uy} =(-7e - 24e_{1234567})(-7e + 24e_{1234567}) =625e.
    $$
    Finally, the inverse of $U$ is
    \begin{eqnarray*}
        U^{-1} 
        &=& \frac{y\widehat{Uy}}{{\rm Det}(U)}=\frac{3}{5}e - \frac{1}{5}e_{35} + \frac{2}{5}e_{12467} - \frac{1}{5}e_{1234567}.
    \end{eqnarray*}
\end{example}

As noted above, the previous example could be computed using a specialized shortcut for both the determinant and the inverse. To better motivate the full generality of Corollary~\ref{cor:n==7}, we now consider a multivector for which such simplification does not apply. 

\begin{example}
    Let $U = e + e_{7} + e_{13} + e_{14} + e_{15} + e_{26} + e_{34} \in \cl_{7,0}$. Applying Corollary~\ref{cor:n==7}, we first compute the auxiliary element
    \begin{eqnarray*}
        W&=&Ub_{\underline{2},\underline{3},\underline{6}}(U)=e + 2e_{7} + 4e_{13} + 2e_{15} + 2e_{137} + 2e_{147} + 2e_{157} - 2e_{2346}.
    \end{eqnarray*}
    Next, let us calculate intermediate quantities
    \begin{eqnarray*}
        W^2&=&-23e - 20e_{7} + 16e_{13} + 8e_{14} + 12e_{15} + 20e_{137} + 4e_{147} + 12e_{157}\\
        &&- 4e_{2346} - 8e_{23467} + 8e_{123456} + 8e_{1234567},\\
        b_{\underline{4}}(W)^2&=&-23e + 28e_{7} - 8e_{14} - 4e_{15} + 12e_{137} - 4e_{147} + 4e_{157} \\
        &&+ 4e_{2346} + 8e_{23467} - 8e_{123456} + 8e_{1234567},\\
        T_1&:=&Wb_{\underline{1},\underline{4},\underline{5}}(W^2)=169 + 134e_{7} - 108e_{13} + 24e_{14} - 58e_{15} \\
        &&- 18e_{137} - 66e_{147} - 26e_{157} + 66e_{2346} - 16e_{2356} - 16e_{2456} \\
        &&- 24e_{23467} + 16e_{23567} + 16e_{24567} - 40e_{123456} - 56e_{1234567},\\
        T_2&:=&b_{\underline{4}}(W)b_{\underline{1},\underline{4},\underline{5}}\left((b_{\underline{4}}(W)^2\right)=-119 - 10e_{7} - 60e_{13} + 72e_{14} - 10e_{15} \\
        &&- 78e_{137} + 66e_{147} - 22e_{157} - 66e_{2346} + 16e_{2356} + 16e_{2456} \\
        &&- 72e_{23467} + 16e_{23567} + 16e_{24567} + 8e_{123456} + 40e_{1234567}.
    \end{eqnarray*}
    Now, let us compute
    \begin{eqnarray*}
        y_0&:=&\frac{1}{3}(T_1 + 2b_4(T_2)) =-23 + 38e_{7} - 76e_{13} + 56e_{14} - 26e_{15} + 46e_{137} \\
        &&- 66e_{147} + 6e_{157} + 66e_{2346} - 16e_{2356} - 16e_{2456} - 56e_{23467} \\
        &&+ 16e_{23567} + 16e_{24567} - 8e_{123456} + 8e_{1234567},\\
        y&=&b_{\underline{2},\underline{3},\underline{6}}(U)y_0=61 + 29e_{7} - 109e_{13} - 109e_{14} - 43e_{15} + 89e_{26} - 43e_{34} \\
        &&- 66e_{35} + 66e_{45} + 74e_{137} + 74e_{147} + 18e_{157} - 94e_{267} + 18e_{347} \\
        &&+ 56e_{357} - 56e_{457} - 26e_{1236} - 26e_{1246} - 2e_{1256} + 34e_{1345} + 2e_{2346} \\
        &&+ 24e_{2356} - 24e_{2456} + 6e_{12367} + 6e_{12467} - 18e_{12567} - 14e_{13457} \\
        &&+ 18e_{23467} - 24e_{23567} + 24e_{24567} - 66e_{123456} + 56e_{1234567},\\
        Uy&=&305 - 80e_{1234567}.
    \end{eqnarray*}
    Thus, the determinant of $U$ is
    \begin{eqnarray*}
        Det(U)=Uy\widehat{Uy}=99425.\\
    \end{eqnarray*}
    Finally, the inverse of $U$ is
    \begin{eqnarray*}
        U^{-1}&=& \frac{1}{{\rm Det}(U)}y\widehat{Uy}=\frac{1}{99425}\left(14125(e + e_{7}) - 35165(e_{13} + e_{14})\right.\\
        &&- 14555(e_{15} + e_{34}) + 28265e_{26}  - 20610(e_{35} - e_{45}) \\
        &&+ 24490(e_{137} + e_{147}) + 5330(e_{157} + e_{347})- 31390e_{267}\\
        &&+ 19160(e_{357} - e_{457}) - 3450(e_{1236} + e_{12367}+ e_{1246}+ e_{12467}) \\
        &&- 2050(e_{1256}+e_{12567}- e_{2346}-e_{23467}) \\
        &&+ 2850(e_{1345} + e_{13457}) + 1400(e_{2356} - e_{2456}      + e_{23567} - e_{24567}) \\
        &&- \left.17810e_{123456} + 21960e_{1234567}\right).
    \end{eqnarray*}
\end{example}

\section{Optimized Explicit Formula of Determinant for $n=7$}
The explicit determinant formula for n=7, derived in the previous section,  is computationally intensive due to repeated application of basis conjugation operations. In this section, we present an optimized version of the formula. The optimization is particularly useful for practical computations, where classical operations of conjugation might be easier to implement.

\begin{lemma}\label{lem:uvu}
    For $U,V\in\cl_{p,q}$ with $p+q\in\{6,7\}$, we have 
    \begin{equation}
        m_{1,2,3,4,5,6}(UV)U=Um_{1,2,3,4,5,6}(VU).\label{uvu}
    \end{equation}
\end{lemma}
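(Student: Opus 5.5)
The plan is to rewrite the conjugation $m_{1,2,3,4,5,6}$ in terms of grade projections, so that both sides of (\ref{uvu}) reduce to the term $UVU$ plus a few scalar or central corrections, and then to match those corrections. By definition, $m_{1,\ldots,6}(X)=X-2\sum_{k=1}^{6}\langle X\rangle_k$. We treat the two dimensions separately, because this expression differs slightly between them.

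\emph{Case $n=6$.} Here $X=\sum_{k=0}^{6}\langle X\rangle_k$, so $m_{1,\ldots,6}(X)=2\langle X\rangle_0-X$. Therefore
$$
m_{1,\ldots,6}(UV)U=2\langle UV\rangle_0U-UVU,\qquad Um_{1,\ldots,6}(VU)=2\langle VU\rangle_0U-UVU.
$$
These agree by property (\ref{comm}), since $\langle UV\rangle_0=\langle VU\rangle_0$ and scalars commute with $U$.

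\emph{Case $n=7$.} Now grade $7$ is not flipped, so $m_{1,\ldots,6}(X)=2\langle X\rangle_0+2\langle X\rangle_7-X$. As before, the $UVU$ terms cancel and the scalar terms match by (\ref{comm}). It remains to show
$$
\langle UV\rangle_7\,U=U\,\langle VU\rangle_7 .
$$
Let $I=e_{1\cdots7}$ be the pseudoscalar. Since $n$ is odd, $I$ commutes with every generator (passing $e_a$ through $I$ costs the sign $(-1)^{6}$), so $I$ is central in $\cl_{p,q}$. Moreover $I^2=\pm1$, so $I$ is invertible and $I^{-1}=\pm I$ is also central. Multiplying by the central element $I^{-1}$ maps grade $7$ to grade $0$, which gives $\langle X\rangle_7=\langle XI^{-1}\rangle_0\,I$. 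Hence
$$
\langle UV\rangle_7=\langle UVI^{-1}\rangle_0 I=\langle VI^{-1}U\rangle_0 I=\langle VUI^{-1}\rangle_0 I=\langle VU\rangle_7 ,
$$
using (\ref{comm}) in the second step and the centrality of $I^{-1}$ in the third. Finally, $\langle VU\rangle_7$ is a real multiple of the central element $I$, so it commutes with $U$. This yields the required equality and completes the proof.

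The only delicate point is the $n=7$ pseudoscalar term. We expect it to be handled entirely by the centrality of $I$ in odd dimension, while in the even case $n=6$ that term never arises because grade $6$ is already flipped by the conjugation.
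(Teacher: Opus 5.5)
Your proof is correct. For $n=6$ it is exactly the paper's argument: both sides equal $2\langle UV\rangle_0U-UVU$ by (\ref{comm}). For $n=7$ you take a genuinely different route.

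The paper does not compute in $\cl_{p,q}$ directly. It uses Lemma~\ref{lem:b} to see that $b_{\underline{1},\ldots,\underline{6}}=m_{1,\ldots,6}$ is carried by $\phi$ to $m_{1,\ldots,6}$ on $\mathbb{C}\otimes\cl_{p',q'}$ (or $\mathbb{S}\otimes\cl_{p',q'}$). It then applies the $n=6$ identity to $\phi(U),\phi(V)$ there and pulls back with $\phi^{-1}$. You instead stay inside $\cl_{p,q}$. You isolate the one extra term that appears because grade $7$ is not flipped, namely $\langle UV\rangle_7U$ versus $U\langle VU\rangle_7$. You then dispose of it using the centrality of $I=e_{1\cdots7}$ in odd dimension together with (\ref{comm}) applied to $\langle U(VI^{-1})\rangle_0$.

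What your version buys:
\begin{itemize}
\item It is self-contained: it needs neither $\phi$, nor Table~\ref{tab}, nor Lemma~\ref{lem:b}.
\item It avoids a tacit step in the paper. The paper applies the $n=6$ case, proved only for real $\cl_{p',q'}$, inside the complexified or hyperbolic algebra. This is harmless, since the $\mathbb{C}$- or $\mathbb{S}$-valued scalar part is still central and cyclic, but it is never stated.
\item It extends at once to every odd $n$ with $m_{1,\ldots,n-1}$.
\end{itemize}
The paper's version, in turn, matches the transport-through-$\phi$ technique it uses throughout.

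One detail is worth stating explicitly. The identity $\langle X\rangle_7=\langle XI^{-1}\rangle_0I$ needs more than the fact that $I^{-1}$ sends grade $7$ to grade $0$. It uses that multiplication by $I^{-1}$ sends grade $k$ to grade $7-k$, so that only the grade-$7$ part of $X$ contributes to the scalar part.
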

\begin{proof}
    First, let us consider the case $n=6$. From (\ref{comm}), we have
    \begin{eqnarray*}      
        \frac{1}{2}(UV+m_{1,2,3,4,5,6}(UV))
        =
        \langle UV\rangle_0
        =
        \langle VU\rangle_0
        =
        \frac{1}{2}(VU+m_{1,2,3,4,5,6}(VU)).
    \end{eqnarray*}
    The expressions on the left side and on the right side are scalars. Multiplying the left side by $2U$ on the right and the right side by $2U$ on the left, we get (\ref{uvu}) for $p+q=6$.

    Now, let us consider the case $n=7$. Using Lemma~\ref{lem:b}, Definition~\ref{def:phi} and just proven property (\ref{uvu}) for the case $n=6$, we get  
    \begin{eqnarray}
        &&\phi(Um_{1,2,3,4,5,6}(VU))
        =
        \phi(U)m_{1,2,3,4,5,6}(\phi(V)\phi(U)))\nonumber\\
        &&=
        \phi(b_{\underline{1},\underline{2},\underline{3},\underline{4},\underline{5},\underline{6}}(UV)U)
        =
        \phi(m_{1,2,3,4,5,6}(UV)U).\label{proof3}
    \end{eqnarray} 
    Finally, applying $\phi^{-1}$ to both sides of (\ref{proof3}) results in the (\ref{uvu}) for $p+q=7$.
\end{proof}
Note that similar results to Lemma~\ref{lem:uvu} can be found in \cite{Shirokov}.

\begin{lemma}\label{lem:mb}
    Let $U\in\cl_{p,q}$, $p+q=7$, $W:=U b_{\underline{2},\underline{3},\underline{6}}(U)$ and  $H:=m_{3,4}(W)$. Then the following identities hold 
    \begin{eqnarray}       
        b_{\underline{4}}(W)&=&m_{3,4}(W),\label{lemb1}\\
        b_{\underline{1},\underline{4},\underline{5}}(WW)&=&m_{1,2,3,4,5,6}(WW),\label{lemb2}\\
        b_{\underline{1},\underline{4},\underline{5}}(HH)&=&m_{1,2,3,4,5,6}(HH),\label{lemb3}\\
        b_{\underline{4}}(Hm_{1,2,3,4,5,6}(HH))
        &=&
        m_{3,4}(Hm_{1,2,3,4,5,6}(HH)),\label{lemb4}
    \end{eqnarray}
    where sets $\underline{1},\underline{2},\underline{3},\underline{4},\underline{5},\underline{6}$ are defined in (\ref{ubdeline}).
\end{lemma}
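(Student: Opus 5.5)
The plan is to avoid computing anything in coordinates. I would show that all four multivectors involved are fixed by the anti-automorphism $b_{\underline{2},\underline{3},\underline{6}}$ from Lemma~\ref{lem:236}. This forces their expansions in $\mathbb{B}_{p,q}$ to contain only basis elements from $\underline{0}\cup\underline{1}\cup\underline{4}\cup\underline{5}$. On such elements the basis conjugations in question coincide with the classical ones. Under $\phi$, this is just the statement that $B\widetilde{B}$ is reversion-symmetric in $\cl_{p',q'}$ and so lives in grades $0,1,4,5$. I will argue directly inside $\cl_{p,q}$ using Lemma~\ref{lem:236}.

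\emph{Step 1 (fixed elements).} Write $\sigma:=b_{\underline{2},\underline{3},\underline{6}}$. It is an involutive anti-automorphism by (\ref{b_236p}) and the properties (\ref{propbconj}). Hence $\sigma(W)=\sigma(\sigma(U))\sigma(U)=U\sigma(U)=W$. If $\sigma(X)=X$ and $\sigma(Y)=Y$ for commuting $X,Y$, then $\sigma(XY)=YX=XY$. In particular $\sigma(X^k)=X^k$ for every power. Since $\underline{0},\dots,\underline{6}$ partition $\mathbb{B}_{p,q}$ (Table~\ref{tab}), an element $X$ with $\sigma(X)=X$ has no components in $\underline{2}\cup\underline{3}\cup\underline{6}$.

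\emph{Step 2 (comparing conjugations).} Two facts follow by reading off grades in (\ref{ubdeline}). First, every basis element of grade $3$ or $4$ lies in $\underline{3}\cup\underline{4}$, and every element of $\underline{4}$ has grade $3$ or $4$. Hence for $X$ without $\underline{3}$-components we get $b_{\underline{4}}(X)=m_{3,4}(X)$. Second, $b_{\underline{1},\dots,\underline{6}}=m_{1,2,3,4,5,6}$ exactly, because both flip everything except $\underline{0}=\{e,e_{1234567}\}$, i.e.\ grades $0,7$. So for $X$ without $\underline{2},\underline{3},\underline{6}$-components we get $b_{\underline{1},\underline{4},\underline{5}}(X)=b_{\underline{1},\dots,\underline{6}}(X)=m_{1,2,3,4,5,6}(X)$.

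\emph{Step 3 (the four identities).} (\ref{lemb1}) follows from Step~1 applied to $W$ together with the first fact of Step~2. (\ref{lemb2}) follows since $WW$ is a power of $W$, hence $\sigma$-fixed. For (\ref{lemb3}), note that $b_{\underline{4}}$ commutes with $\sigma$, because basis conjugations commute. Therefore $\sigma(H)=\sigma(b_{\underline{4}}(W))=b_{\underline{4}}(W)=H$, and $HH$ is $\sigma$-fixed. For (\ref{lemb4}), I would use $m_{1,2,3,4,5,6}(HH)=2\langle HH\rangle_0-HH$. Then $Hm_{1,2,3,4,5,6}(HH)=2\langle HH\rangle_0 H-H^3$ is a polynomial in $H$, hence $\sigma$-fixed, and the first fact of Step~2 gives the claim.

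The only genuinely delicate point is Step~2: checking that the sets in (\ref{ubdeline}) partition $\mathbb{B}_{p,q}$ and that the grades of $\underline{3}$ and $\underline{4}$ are exactly $\{3,4\}$. This is a finite inspection of Table~\ref{tab}. Everything else is formal.
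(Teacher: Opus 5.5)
Your handling of (\ref{lemb1})--(\ref{lemb3}) is correct and matches the paper's proof. The involution $\sigma=b_{\underline{2},\underline{3},\underline{6}}$ fixes $W$, $H$ and their squares. Elements fixed by $\sigma$ have no components in $\underline{2}\cup\underline{3}\cup\underline{6}$. On such elements $b_{\underline{4}}=m_{3,4}$ and $b_{\underline{1},\underline{4},\underline{5}}=m_{1,2,3,4,5,6}$.

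The step for (\ref{lemb4}) fails as written. The identity $m_{1,2,3,4,5,6}(HH)=2\langle HH\rangle_0-HH$ is false when $p+q=7$. The reason is that $m_{1,2,3,4,5,6}$ leaves grade $7$ untouched, as you note yourself in Step~2. The correct relation is
\[
m_{1,2,3,4,5,6}(X)=2\langle X\rangle_0+2\langle X\rangle_7-X,
\]
and $\langle HH\rangle_7$ is generically nonzero. For example, take $U=e+e_{35}+e_{1234567}$ in $\cl_{7,0}$. Then $H=m_{3,4}(W)=e+2e_{1234567}$ and $HH=-3e+4e_{1234567}$. So $Hm_{1,2,3,4,5,6}(HH)$ is not a real polynomial in $H$, and its $\sigma$-invariance is not yet established.

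The missing ingredient is that the coefficient $Z:=2\langle HH\rangle_0+2\langle HH\rangle_7$ is both central and $\sigma$-fixed. It is central because $e_{1234567}$ is central when $n=7$ is odd. It is $\sigma$-fixed because $e_{1234567}\in\underline{0}$. Then
\[
\sigma(ZH-H^3)=\sigma(H)\sigma(Z)-H^3=HZ-H^3=ZH-H^3,
\]
and your first fact from Step~2 finishes (\ref{lemb4}).

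With this repair your argument is a legitimate alternative to the paper's at exactly this point. The paper obtains the needed commutation $Hm_{1,2,3,4,5,6}(HH)=m_{1,2,3,4,5,6}(HH)H$ from Lemma~\ref{lem:uvu}. It proves that lemma for $n=7$ by transporting the $n=6$ identity through $\phi$. Your corrected version gets the same commutation directly inside $\cl_{p,q}$, without any reference to $\phi$. It uses only the fact that $\frac12\bigl(X+m_{1,2,3,4,5,6}(X)\bigr)=\langle X\rangle_0+\langle X\rangle_7$ lies in the center.
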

\begin{proof}
    By Lemma~\ref{lem:236}, we have the following identities
    \begin{eqnarray*}
        b_{\underline{2},\underline{3},\underline{6}}(W)=W,\quad b_{\underline{2},\underline{3},\underline{6}}(WW)=WW,\quad
    b_{\underline{2},\underline{3},\underline{6}}(HH)=HH.
    \end{eqnarray*}
    If we combine them with the commutative properties of the basis conjugation operations~(\ref{propbconj}), Lemma~\ref{cor:m} and Lemma~\ref{lem:b}, we obtain identities~(\ref{lemb1}), (\ref{lemb2}) and~(\ref{lemb3}) in the following way
    \begin{eqnarray*}       
        b_{\underline{4}}(W)&=&
        b_{\underline{4}}(b_{\underline{3}}(W))=
        b_{\underline{3},\underline{4}}(W)=m_{3,4}(W),\\
        b_{\underline{1},\underline{4},\underline{5}}(WW)&=&
        b_{\underline{1},\underline{4},\underline{5}}(b_{\underline{2},\underline{3},\underline{6}}(WW))
        =
        b_{\underline{1},\underline{2},\underline{3},\underline{4},\underline{5},\underline{6}}(WW)
        =
        m_{1,2,3,4,5,6}(WW),\\
        b_{\underline{1},\underline{4},\underline{5}}(HH)&=&
        b_{\underline{1},\underline{4},\underline{5}}(b_{\underline{2},\underline{3},\underline{6}}((HH))
        =
        b_{\underline{1},\underline{2},\underline{3},\underline{4},\underline{5},\underline{6}}(HH)
        =
        m_{1,2,3,4,5,6}(HH).
    \end{eqnarray*}
    
    Now, using Lemma~\ref{lem:b}, Lemma~\ref{lem:uvu} and Definition~\ref{def:phi}, we get  
    \begin{eqnarray}
    \phi(Hm_{1,2,3,4,5,6}(HH))
    &=&
    \phi(Hb_{\underline{1},\underline{2},\underline{3},\underline{4},\underline{5},\underline{6}}(HH))\nonumber\\
    &=&
    \phi(H)m_{1,2,3,4,5,6}(\phi(H)\phi(H)))\nonumber\\
    &=&
    m_{1,2,3,4,5,6}(\phi(H)\phi(H))\phi(H)\label{proof4}\\
    &=&
    \phi(b_{\underline{1},\underline{2},\underline{3},\underline{4},\underline{5},\underline{6}}(HH)H)\nonumber\\
    &=&
    \phi(m_{1,2,3,4,5,6}(HH)H).\nonumber
    \end{eqnarray} 
        Applying $\phi^{-1}$ to both sides of (\ref{proof4}) results in the following identity 
    \begin{equation}
        Hm_{1,2,3,4,5,6}(HH)=m_{1,2,3,4,5,6}(HH)H.\label{vvv}
    \end{equation}
    Next, using commutative properties of the basis conjugation operations~(\ref{propbconj}), Lemma~\ref{lem:236}, Lemma~\ref{lem:uvu} and~(\ref{vvv}), we obtain
    \begin{eqnarray}
        &&b_{\underline{2},\underline{3},\underline{6}}\left(b_{\underline{4}}(Hm_{1,2,3,4,5,6}(HH))\right)
        =
        b_{\underline{4}}\left(b_{\underline{2},\underline{3},\underline{6}}(Hm_{1,2,3,4,5,6}(HH))\right)\nonumber\\
        &&=
        b_{\underline{4}}\left(b_{\underline{2},\underline{3},\underline{6}}(m_{1,2,3,4,5,6}(HH))b_{\underline{2},\underline{3},\underline{6}}(H)\right)
        \nonumber\\
        &&=
        b_{\underline{4}}\left(m_{1,2,3,4,5,6}(b_{\underline{2},\underline{3},\underline{6}}(HH))b_{\underline{2},\underline{3},\underline{6}}(H)\right)\nonumber\\
        &&=
        b_{\underline{4}}(m_{1,2,3,4,5,6}(HH)H)
        =
        b_{\underline{4}}(Hm_{1,2,3,4,5,6}(HH)).\label{proof5}
    \end{eqnarray}
    In~(\ref{proof5}), we see that the expression~$b_{\underline{4}}(Hm_{1,2,3,4,5,6}(HH))$ does not change under $b_{\underline{2}, \underline{3}, \underline{6}}$, which in particular means that it does not change under $b_{\underline{3}}$. This leads to identity~(\ref{lemb4}) in the following manner
    \begin{eqnarray}
        b_{\underline{4}}\left(Hm_{1,2,3,4,5,6}(HH)\right)=
        b_{\underline{3}}\left(b_{\underline{4}}(Hm_{1,2,3,4,5,6}(HH))\right)=     
        m_{3,4}(Hm_{1,2,3,4,5,6}(HH)).\nonumber
    \end{eqnarray}
\end{proof}

\begin{thm}\label{n===7}
    Let $U\in\cl_{p,q}$, where $p+q=7$. Then the determinant of the element $U$ can be calculated by the formula
    \begin{eqnarray}
       {\rm Det}(U)&=&Uy\widehat{Uy}\in\cl^0_{p,q},\label{th4det}
    \end{eqnarray}
    where 
    \begin{eqnarray}
       y&:=& b_{\underline{2},\underline{3},\underline{6}}(U)\left(
       \frac{1}{3}W m_{1,2,3,4,5,6}(WW)\right.\label{th4y}\\
       &+&\left.\frac{2}{3}m_{3,4}(m_{3,4}(W)m_{1,2,3,4,5,6}(m_{3,4}(W)m_{3,4}(W)))
       \right),\nonumber\\
       W&:=&U b_{\underline{2},\underline{3},\underline{6}}(U).\nonumber
    \end{eqnarray}
    Here sets $\underline{2},\underline{3},\underline{6}$ are defined in (\ref{ubdeline}).
\end{thm}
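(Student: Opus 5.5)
The plan is to derive Theorem~\ref{n===7} directly from Theorem~\ref{n==7} by substituting, term by term, the identities of Lemma~\ref{lem:mb}. The two formulas have the same determinant expression ${\rm Det}(U)=Uy\widehat{Uy}$ and the same $W=Ub_{\underline{2},\underline{3},\underline{6}}(U)$. So it suffices to show that the element $y$ in (\ref{th3y}) equals the element $y$ in (\ref{th4y}). Since the common left factor $b_{\underline{2},\underline{3},\underline{6}}(U)$ and the coefficients $\frac13,\frac23$ agree, we only need to match the two summands inside the parentheses separately.

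For the first summand, identity (\ref{lemb2}) gives
\[
b_{\underline{1},\underline{4},\underline{5}}(WW)=m_{1,2,3,4,5,6}(WW).
\]
Multiplying by $W$ on the left turns $Wb_{\underline{1},\underline{4},\underline{5}}(WW)$ into $Wm_{1,2,3,4,5,6}(WW)$.

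For the second summand I will work from the inside out, setting $H:=m_{3,4}(W)$ as in Lemma~\ref{lem:mb}.
\begin{itemize}
\item By (\ref{lemb1}), every occurrence of $b_{\underline{4}}(W)$ becomes $H$, so the inner product is $b_{\underline{4}}(W)b_{\underline{4}}(W)=HH$.
\item By (\ref{lemb3}), $b_{\underline{1},\underline{4},\underline{5}}(HH)=m_{1,2,3,4,5,6}(HH)$. The expression inside the outer $b_{\underline{4}}$ is therefore $Hm_{1,2,3,4,5,6}(HH)$.
\item By (\ref{lemb4}), the outer $b_{\underline{4}}$ applied to $Hm_{1,2,3,4,5,6}(HH)$ may be replaced by $m_{3,4}$.
\end{itemize}
This yields exactly $m_{3,4}(m_{3,4}(W)m_{1,2,3,4,5,6}(m_{3,4}(W)m_{3,4}(W)))$, which is the second summand in (\ref{th4y}).

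Since the substitutions happen at the level of the arguments, I will note that each replacement is an equality of elements of $\cl_{p,q}$. Consequently, applying further linear maps or products to equal arguments gives equal results, and no additional commutation properties are needed.

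Essentially all of the difficulty has already been absorbed into Lemma~\ref{lem:mb}, in particular into (\ref{lemb4}). That identity is the one place where the outer $b_{\underline{4}}$ cannot be converted naively, because its argument is not a priori invariant under $b_{\underline{3}}$; this is handled there via Lemma~\ref{lem:uvu}. The only care needed here is to apply the substitutions in the right order: first (\ref{lemb1}) and then (\ref{lemb3}) inside, and only afterwards (\ref{lemb4}) on the outside. The reason is that (\ref{lemb4}) is stated for the specific form $Hm_{1,2,3,4,5,6}(HH)$, so the inner expression must already have been brought to that form before it can be applied. Once $y$ is identified, the determinant formula (\ref{th4det}) follows immediately from (\ref{th3det}).
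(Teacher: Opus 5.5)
Your proposal is correct and is essentially the paper's own argument. The paper also proves this theorem by substituting the identities (\ref{lemb1})--(\ref{lemb4}) of Lemma~\ref{lem:mb} into the formula of Theorem~\ref{n==7}; you simply write out that substitution term by term, doing the inner replacements before applying (\ref{lemb4}).
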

\begin{proof}
    The result follows from Theorem \ref{n==7} and Lemma \ref{lem:mb}.
\end{proof}
\begin{corollary}
    If ${\rm Det}(U)\neq 0$, then the inverse of $U\in\cl_{p,q}, p+q=7$ is given by
    $$
       U^{-1} = \frac{y\widehat{Uy}}{{\rm Det}(U)},
    $$
    where ${\rm Det}$ and $y$ are defined in (\ref{th4det}) and (\ref{th4y}), respectively. 
\end{corollary}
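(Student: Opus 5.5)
The corollary follows from Theorem~\ref{n===7} in the same way Corollary~\ref{cor:n==7} follows from Theorem~\ref{n==7}. The only work is to show that the scalar ${\rm Det}(U)=Uy\widehat{Uy}$ actually factors as $U$ times the candidate inverse $y\widehat{Uy}$, and that this candidate is a two-sided inverse.

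\emph{Step 1 (right inverse).} By associativity of the geometric product,
$$U\,\big(y\widehat{Uy}\big)=(Uy)\widehat{Uy}={\rm Det}(U).$$
By Theorem~\ref{n===7}, this is a scalar in $\cl^0_{p,q}$. If ${\rm Det}(U)\neq 0$, dividing by it gives $UX=e$ with $X:=y\widehat{Uy}/{\rm Det}(U)$.

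\emph{Step 2 (left inverse).} We upgrade the one-sided identity to a two-sided one. $\cl_{p,q}$ is a finite-dimensional associative unital algebra, and left multiplication $L_U:Z\mapsto UZ$ is a linear map of it. Since $UX=e$, the map $L_U$ is surjective, because $L_U(XZ)=Z$ for every $Z$. A surjective linear endomorphism of a finite-dimensional space is injective, so $L_U$ is bijective.

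Now compute
$$L_U(XU-e)=(UX)U-U=U-U=0.$$
Injectivity of $L_U$ then gives $XU=e$. So $X$ is a two-sided inverse, and it is unique, which justifies writing $U^{-1}=X$.

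\emph{Alternative route.} One could instead transport the inverse formula after Theorem~\ref{n=7} through $\phi^{-1}$. One would show $\phi^{-1}(y_{\rm HS})$ equals the $y$ of (\ref{th4y}), combining the rewriting in the proof of Theorem~\ref{n==7} with Lemma~\ref{lem:mb}. This is more cumbersome than the direct argument above.

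The only point requiring care is that the expression $Uy\widehat{Uy}$ in Theorem~\ref{n===7} must really lie in $\cl^0_{p,q}$. That is part of the theorem's statement, so there is no substantive obstacle. The corollary reduces to associativity plus the finite-dimensional argument that a one-sided inverse is two-sided.
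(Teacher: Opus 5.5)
Your argument is correct and matches the paper, which gives no separate proof and treats the corollary as an immediate consequence of Theorem~\ref{n===7}: $U\,(y\widehat{Uy})={\rm Det}(U)$ is a nonzero scalar, so $y\widehat{Uy}/{\rm Det}(U)$ is the inverse. Your Step~2, that a one-sided inverse in the finite-dimensional algebra $\cl_{p,q}$ is automatically two-sided, makes explicit a point the paper leaves tacit.
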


Note that the optimized version of the formulas yields better results when the conjugation operation itself is better optimized. This improvement arises because conjugation operations are often highly optimized in many programming libraries, as we demonstrate in Section~\ref{sec:num}. Furthermore, the use of classical conjugation might be theoretically preferable, as it minimizes dependence on specific basis elements.
\section{Numerical Implementation and Performance of the Formulas for $n=7$}\label{sec:num}

In this section, we provide a numerical implementation of the formulas and evaluate their performance. Specifically, we implement the formulas for the determinant and inverse in the geometric algebras $\cl_{p,q}$ with $p + q = 7$, as derived in earlier sections using formal algebraic methods. For completeness, we also implement a recursive method based on the Faddeev–LeVerrier algorithm. The code implementation is available at the repository \url{github.com/kamranuz/clifford_7d}.

Hitzer's and Sangwine's isomorphism-dependent formulas from Theorem~\ref{n=7} are implemented in the functions \texttt{cl7\_determinant\_hitzersangwine} and \texttt{cl7\_inverse\_hitzersangwine}. Recursive method  based on Faddeev-LeVerrier algorithm \cite{Shirokov} is implemented in the functions \texttt{cl7\_inverse\_fvs} and \texttt{cl7\_determinant\_fvs}. Since in our setup a random multivector is multivector of span dimension 7 with probability one, recursive method with a variable number of steps \cite{Prodanov} is equivalent to \texttt{cl7\_determinant\_fvs} and \texttt{cl7\_inverse\_fvs}. Our explicit formulas from Theorem~\ref{n==7} are implemented in the functions \texttt{cl7\_determinant} and \texttt{cl7\_inverse}, while optimized versions from Theorem~\ref{n===7} are provided in functions \texttt{cl7\_determinant\_opt} and \texttt{cl7\_inverse\_opt}. 

The main difference between optimized and non-optimized versions lies in the choice of conjugation operations. Consequently, performance depends heavily on how the Clifford conjugation is implemented. To illustrate this, we compare two distinct approaches: \textit{the projection method}, which subtracts specific components from the original multivector, and \textit{the signed array method}, which applies conjugation through element-wise multiplication of basis elements with a predefined array of signs. The functions \texttt{cl7\_determinant}, \texttt{cl7\_determinant\_opt}, \texttt{cl7\_inverse}, and \texttt{cl7\_inverse\_opt} are implemented using first approach, while  \texttt{cl7\_ar\_inverse\_opt}, \texttt{cl7\_ar\_determinant\_opt}, \texttt{cl7\_ar\_determinant} and \texttt{cl7\_ar\_inverse} use the second. 

All the experiments were conducted on Google Compute Engine virtual instances via Google Colab with 12.7\,GB of RAM and a dual‑core \texttt{Intel(R) Xeon(R) CPU @ 2.20GHz)}, with hyper‑threading disabled. The implementation environment consisted of Python 3.12.13, the \texttt{clifford} Python library 1.5.1~\cite{Python} for geometric algebra construction and multivector operations. Additionally, standard and scientific Python libraries such as \texttt{numpy} 2.0.2, \texttt{timeit}, \texttt{gc}, and \texttt{tracemalloc} were employed for computation and performance profiling.

We first validated the correctness of the implementation using unit tests. These tests confirmed that our determinant and inverse functions yield results consistent with algebraic expectations. However, we discovered that due to limits of floating-point precision, \texttt{cl7\_determinant\_hitzersangwine} and  \texttt{cl7\_inverse\_hitzersangwine}  do not produce correct results. Particularly, \texttt{cl7\_determinant\_hitzersangwine} returns a non-zero scalar accompanied by a non-scalar part close to zero, while \texttt{cl7\_inverse\_hitzersangwine} returns an element that does not satisfy the defining inverse property. While the first function we rectified by clipping coefficients close to zero, the second we could not correct.

We then evaluated performance. A test set of 100\,000 multivectors, each having 128 normally distributed coefficients, with a fixed NumPy random seed to ensure reproducibility. Each function was applied to every multivector in this set, and the resulting performance metrics are summarized in Table~\ref{tab:per} and Figure~\ref{fig:per}. For each candidate function, the evaluation proceeded as follows: garbage collection was disabled to avoid unpredictable timing interruptions; each multivector in the fixed test set was processed sequentially; and the execution time per call was measured using \texttt{time.perf\_counter}. After all runs completed, peak memory usage and memory allocation traces were recorded, and garbage collection was turned on.

Function \texttt{cl7\_inverse\_hitzersangwine} was excluded from the subsequent performance evaluation as it failed the unit tests. The performance results illustrate a clear trade-off between computational efficiency and numerical robustness. According to Hitzer’s and Sangwine's formula, the core calculations occur in a subalgebra of reduced dimension. Computations within this smaller algebra are faster due to the lower number of coefficients required (64 versus 128 in the full algebra), explaining the superior speed of \texttt{cl7\_determinant\_hitzersangwine}. However, this theoretical speed advantage is negated in practice by numerical instability, which manifests as non-scalar artifacts in the determinant and a complete failure in the inverse computation.

We explain this instability by compounded floating-point errors, which arise primarily from Python's standard implementation of complex numbers required for evaluating the isomorphism~$\phi$. In contrast, the formulas we have derived do not require an explicit implementation of the isomorphism~$\phi$, nor do they rely complex or hyperbolic numbers. This not only simplifies implementation but also enhances numerical stability by avoiding error-prone intermediate mappings.

The performance results demonstrate that implementations based on the signed array method are among the fastest, with mean execution times ranging from 1.76\,ms to 2.27\,ms and low standard deviations. The near-identical performance of the optimised and non-optimised variants in this group confirms that the signed-array approach is already highly efficient, leaving essentially no room for further optimisation. In contrast, the projection method benefits substantially from the optimised conjugation strategy: the optimised versions (\texttt{cl7\_determinant\_opt} and \texttt{cl7\_inverse\_opt}) achieve a roughly twofold speedup over their non-optimised counterparts (\texttt{cl7\_determinant} and \texttt{cl7\_inverse}), reducing mean times from approximately 14\,ms to 6.5\,ms. The recursive method based on the Faddeev–LeVerrier algorithm (\texttt{cl7\_determinant\_fvs} and \texttt{cl7\_inverse\_fvs}) are faster than implementations with the projection method but slower than implementation with the signed-array method and isomorphism-based variants.

\begin{table}[h!]
\centering
\label{tab:per}
\begin{tabular}{|l|c|c|}
\hline
\textbf{Function} & \textbf{Mean Time} & \textbf{Std Dev }\\ 
& \textbf{(ms)} & \textbf{(ms)} \\\hline
\texttt{cl7\_determinant\_hitzersangwine} & 1.534 & 0.478 \\\hline
\texttt{cl7\_ar\_determinant\_opt} & 1.760 & 0.458 \\\hline
\texttt{cl7\_ar\_determinant} & 1.765 & 0.461 \\\hline
\texttt{cl7\_ar\_inverse} & 2.264 & 0.575 \\\hline
\texttt{cl7\_ar\_inverse\_opt} & 2.273 & 0.585 \\\hline
\texttt{cl7\_determinant\_fvs} & 4.447 & 1.069 \\\hline
\texttt{cl7\_inverse\_fvs} & 5.554 & 1.338 \\\hline
\texttt{cl7\_determinant\_opt} & 6.276 & 1.455 \\\hline
\texttt{cl7\_inverse\_opt} & 6.736 & 1.553 \\\hline
\texttt{cl7\_determinant} & 13.735 & 3.107 \\\hline
\texttt{cl7\_inverse} & 14.118 & 3.185 \\\hline
\end{tabular}
\caption{Performance Metrics (Averaged Over 100\,000 Runs)}
\end{table}

\begin{figure}[h!]
    \centering
    \begin{tikzpicture}
        \begin{axis}[
            title={},
            width=0.95\textwidth,
            height=9cm,
            bar width=13pt,
            symbolic x coords={
                cl7\_determinant\_hitzersangwine,
                cl7\_ar\_determinant\_opt,
                cl7\_ar\_determinant,
                cl7\_ar\_inverse,
                cl7\_ar\_inverse\_opt,
                cl7\_determinant\_fvs,
                cl7\_inverse\_fvs,
                cl7\_determinant\_opt,
                cl7\_inverse\_opt,
                cl7\_determinant,
                cl7\_inverse,
            },
            xtick=data,
            nodes near coords,
            nodes near coords align={vertical},
            ylabel={Mean Time (ms)},
            xlabel={},
            xticklabel style={rotate=45, anchor=east, font=\tiny},
            ymajorgrids,
            grid style={dashed, gray!30},
            error bars/y dir=both,
            error bars/y explicit,
            ymax=18,
            ymin=0
        ]
            \addplot[ybar, fill=gray!0, draw=black, error bars/.cd, y explicit, error bar style={black, thick}] coordinates {
                (cl7\_determinant\_hitzersangwine, 1.534)
                (cl7\_determinant\_fvs,           4.447) 
                (cl7\_inverse\_fvs,               5.554) 
                (cl7\_determinant,               13.735) 
                (cl7\_determinant\_opt,           6.276) 
                (cl7\_inverse,                   14.118) 
                (cl7\_inverse\_opt,               6.736) 
                (cl7\_ar\_determinant,            1.765) 
                (cl7\_ar\_determinant\_opt,       1.760) 
                (cl7\_ar\_inverse,                2.264) 
                (cl7\_ar\_inverse\_opt,           2.273) 
            };
            \addplot[ybar, fill=black!60, draw=black, error bars/.cd, y explicit, error bar style={black, thick}] coordinates {
                (cl7\_determinant\_hitzersangwine, 1.534) +- (0, 0.478)
            };
            \addplot[ybar, fill=white!100, draw=black, error bars/.cd, y explicit, error bar style={black, thick}] coordinates {
                (cl7\_determinant\_fvs,           4.447) +- (0, 1.069)
                (cl7\_inverse\_fvs,               5.554) +- (0, 1.338)
            };
            \addplot[ybar, fill=gray!70, draw=black, error bars/.cd, y explicit, error bar style={black, thick}] coordinates {
                (cl7\_determinant,               13.735) +- (0, 3.107)
                (cl7\_inverse,                   14.118) +- (0, 3.185)
            };
            \addplot[ybar, fill=gray!40, draw=black, error bars/.cd, y explicit, error bar style={black, thick}] coordinates {
                (cl7\_determinant\_opt,           6.276) +- (0, 1.455)
                (cl7\_inverse\_opt,               6.736) +- (0, 1.553)
            };
            
            \addplot[ybar, fill=green!40, draw=black, error bars/.cd, y explicit, error bar style={black, thick}] coordinates {
                (cl7\_ar\_determinant\_opt,       1.760) +- (0, 0.458)
                (cl7\_ar\_inverse\_opt,           2.273) +- (0, 0.585)
            };
            
            \addplot[ybar, fill=green!70, draw=black, error bars/.cd, y explicit, error bar style={black, thick}] coordinates {
                (cl7\_ar\_determinant,            1.765) +- (0, 0.461)
                (cl7\_ar\_inverse,                2.264) +- (0, 0.575)
            };
        \end{axis}
    \end{tikzpicture}
    \caption{Performance Metrics (Averaged Over 100\,000 Runs)}
    \label{fig:per}
\end{figure}

\section{Explicit Formula for the case $n=2k+1$ from $n=2k$}

In this section, we present a method to derive explicit formulas for the determinant and inverse in the for Clifford algebras over $2k + 1$-dimensional vector spaces, using known formulas for $2k$-dimensional vector spaces case\footnote{
One of the reviewers noted that there is no analogous step from odd dimension to the next even dimension for the following reason. The tensor products $\mathbb{C}\otimes\cl_{p,q}$ and $\mathbb{S}\otimes\cl_{p,q}$ are equivalent to $\cl_{0,1} \otimes \cl_{p,q}$ and $\cl_{1,0} \otimes \cl_{p,q}$ respectively. However, when two or more one-dimensional Clifford algebras appear, such a tensor product no longer yields a new Clifford algebra. Instead, it describes an extension \cite{Marchuk}.}. First, let us restate determinant and inverse formulas for $n=2k+1$ obtained by Hitzer and Sangwine using a technique based on algebra isomorphisms~\cite{Hitzer2}.
\begin{definition}[\cite{Hitzer2}]
\label{def:Phi}
    Let $\{e_1,\ldots,e_n\}$ be the generators of $\cl_{p,q}$ with $p+q=n=2k+1$ and $\{E_1,\ldots,E_{n-1}\}$ be the generators of $\cl_{p',q'}$ with $p'+q'=n-1=2k$. Let us define the isomorphism 
    \begin{eqnarray}    
        &&\Phi:\cl_{p,q}\xrightarrow{\sim}
        \begin{cases}
            \mathbb{C}\otimes\cl_{q,p-1},\quad \text{if $k+q$ is odd},\nonumber\\
            \mathbb{S}\otimes\cl_{q,p-1},\quad \text{else},
        \end{cases}   \nonumber\\
        &&\Phi(e_{1~m+1}):=1\otimes E_m,\quad m=1,\ldots,n-1,\nonumber\\
        &&\Phi(e_{12\ldots n}):=r\otimes E,\quad r=\begin{cases}
            i\in\mathbb{C},\quad \text{if $k+q$ is odd},\\
            j\in\mathbb{S},\quad \text{else},
        \end{cases}\nonumber
    \end{eqnarray}
    where $E$ is the identity element of the algebra $\cl_{p',q'}$.
    \end{definition}
\begin{thm}[\cite{Hitzer2}]
    Let us have $U\in\cl_{p',q'},p'+q'=2k$ and
    \begin{eqnarray*}
    {\rm Det}_{2k}(U)=Uf(U)\in\cl_{p',q'}^0,
    \end{eqnarray*}
    where $f(U)$ is a sum of products of conjugations. Then, for $V\in\cl_{p,q}, p+q~=~2k+1$, we have
    \begin{eqnarray}
    {\rm Det}_{2k+1}(V)=\Phi^{-1}({\rm Det}_{2k}(B))\widehat{\Phi^{-1}}({\rm Det}_{2k}(B)),\quad B:=\Phi(V).\label{thm:gen1}
    \end{eqnarray}
\end{thm}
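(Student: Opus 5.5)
We reduce both sides to matrix determinants. By the remark after Definition~\ref{def:det}, ${\rm Det}$ agrees with the matrix determinant under a faithful representation of size $N=2^{[\frac{n+1}{2}]}$. For $n=2k+1$ and $n-1=2k$ both give $N=2^k$. The statement's displayed formula should be read as ${\rm Det}_{2k+1}(V)=X\widehat{X}$ with $X:=\Phi^{-1}({\rm Det}_{2k}(B))$, where ${\rm Det}_{2k}$ is applied formally to $B=\Phi(V)\in R\otimes\cl_{p',q'}$ and $R\in\{\mathbb{C},\mathbb{S}\}$.

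\textbf{Step 1 (the even formula survives scalar extension).} The identity ${\rm Det}_{2k}(U)=Uf(U)\in\cl^0_{p',q'}$ is polynomial in the real coordinates of $U$. The conjugations in $f$ are grade-wise sign changes, so they extend $R$-linearly to $R\otimes\cl_{p',q'}$ and commute with multiplication by $R$. Hence $Bf(B)$ is the $R$-linear extension of a polynomial identity. I therefore expect $Bf(B)\in R\otimes E$ and $Bf(B)=\det_R(\beta(B))$, where $\beta$ is the matrix representation of $\cl_{p',q'}$ extended to $R$-valued $2^k\times2^k$ matrices. The justification is that two polynomial identities agreeing on a Zariski-dense real domain agree after extension to any commutative $\mathbb{R}$-algebra. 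It matters that $\mathbb{S}\cong\mathbb{R}\oplus\mathbb{R}$ is not a field, and this argument does not need one.

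\textbf{Step 2 (from $R$-determinant to real determinant).} Write ${\rm Det}_{2k}(B)=\alpha+r\beta_0$ with $\alpha,\beta_0\in\mathbb{R}$. Then $X=\alpha e+\beta_0 e_{12\ldots n}$, since $\Phi^{-1}(r\otimes E)=e_{1\ldots n}$. The pseudoscalar has odd grade, so $\widehat{X}=\alpha e-\beta_0 e_{1\ldots n}$ and $X\widehat{X}=\alpha^2-\beta_0^2 e_{1\ldots n}^2$. In the $\mathbb{C}$ case $e_{1\ldots n}^2=-1$, which gives $|\alpha+i\beta_0|^2$. In the $\mathbb{S}$ case $e_{1\ldots n}^2=+1$, which gives $\alpha^2-\beta_0^2=(\alpha+\beta_0)(\alpha-\beta_0)$.

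\textbf{Step 3 (identification with ${\rm Det}_{2k+1}$).} It remains to show that ${\rm Det}_{2k+1}(V)$ equals this norm of the $R$-valued determinant, by cases. If $R=\mathbb{C}$, then $\cl_{p,q}\cong\mathbb{C}\otimes\cl_{p',q'}\cong{\rm Mat}(2^k,\mathbb{C})$. The minimal complex representation has size $2^k=N$, and ${\rm Det}_{2k+1}$ is the complex determinant of $\beta(V)$. This determinant is real by the intrinsic Definition~\ref{def:det}, so $\beta_0=0$ and $|{\rm Det}|^2$ should collapse to ${\rm Det}$. This looks wrong, so I need to recheck the convention. By the definitions in \cite{Shirokov}, ${\rm Det}$ for odd $n$ uses $\mathbb{C}\otimes\cl_{p,q}\cong{\rm Mat}(N,\mathbb{C})\oplus{\rm Mat}(N,\mathbb{C})$, or the real analogue, with $N=2^k$. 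The resulting value is the product of the determinants of the two blocks. These two blocks are exactly $\alpha\pm\beta_0\cdot(\text{eigenvalue of } e_{1\ldots n})$, i.e.\ the two characters of $R\otimes_{\mathbb{R}}\mathbb{C}\cong\mathbb{C}\oplus\mathbb{C}$. So in both cases ${\rm Det}_{2k+1}(V)=\chi_+(\det_R)\chi_-(\det_R)$, and that product is exactly $X\widehat{X}$ from Step 2.

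\textbf{Main obstacle.} The hardest step is Step 3. It requires matching the block decomposition of the complexified odd algebra, as used in the definition of ${\rm Det}$ via Definition~\ref{def:det}, with the two characters of $R$ under $\Phi$. I would prove this by a characteristic-polynomial comparison. The Faddeev--LeVerrier recursion in Definition~\ref{def:det} computes the product of the block determinants. Applying $\Phi$ to the recursion turns scalar projections into $R$-valued ones, and I would then check that $\langle\cdot\rangle_0$ in $\cl_{p,q}$ corresponds to the real part of the $R\otimes E$ component.
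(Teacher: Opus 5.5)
The paper gives no proof of this statement; it quotes it from \cite{Hitzer2}, so your argument is an independent proof. Its final form (Steps~1--2 and the corrected part of Step~3) is sound. However, the opening claim is false and must be removed. Definition~\ref{def:det} gives $N=2^{[\frac{n+1}{2}]}=2^{k+1}$ for $n=2k+1$, not $2^k$; for example, $n=1$ gives $N=2$ and ${\rm Det}(U)=U\widehat{U}$. This is not cosmetic. ${\rm Det}_{2k}$ is homogeneous of degree $2^k$, so $X\widehat{X}$ has degree $2^{k+1}$ in the coefficients of $V$, and with $N=2^k$ the statement would be false for degree reasons.

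The abandoned first half of Step~3 comes from this error. It also conflates the real-algebra isomorphism $\rho:\cl_{p,q}\to{\rm Mat}(2^k,\mathbb{C})$ (case $R=\mathbb{C}$) with the complexification that defines ${\rm Det}$. In fact ${\rm Det}_{2k+1}(V)=|\det\rho(V)|^2$, not $\det\rho(V)$. Replace that passage with the direct statement that ${\rm Det}_{2k+1}$ is the determinant of the $2^{k+1}$-dimensional representation $\rho_+\oplus\rho_-$ of $\mathbb{C}\otimes\cl_{p,q}$. On its two $2^k$-blocks, $e_{1\ldots n}$ acts by $\pm\lambda$ with $\lambda\in\{1,i\}$. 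Similarly, in Step~1 the extended matrices have entries in $R\otimes_{\mathbb{R}}\mathbb{C}$, not in $R$. All you need there is $Bf(B)=P(B)\in R\otimes E$ with $P={\rm Det}_{2k}$, and your polynomial-identity argument gives exactly that.

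Your trace plan does close Step~3. Only $e$ and $e_{1\ldots n}$ are sent into $R\otimes E$. So the $E$-component $z_m=a_m+r\,b_m\in R$ of $B^m=\Phi(V^m)$ has $a_m=\langle V^m\rangle_0$. Since $\chi_+(z_m)+\chi_-(z_m)=2a_m$, you get $2^{k+1}\langle V^m\rangle_0={\rm tr}\,\beta_{2k}(\chi_+(B))^m+{\rm tr}\,\beta_{2k}(\chi_-(B))^m$ for all $m$, where $\beta_{2k}$ is the $2^k$-dimensional representation of $\mathbb{C}\otimes\cl_{p',q'}$. Hence the recursion of Definition~\ref{def:det} for $V$ receives the same power sums as the block matrix ${\rm diag}\bigl(\beta_{2k}(\chi_+(B)),\beta_{2k}(\chi_-(B))\bigr)$. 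Newton's identities, together with $P(\chi_\pm(B))=\chi_\pm(P(B))$ (since $P$ has real coefficients), then give ${\rm Det}_{2k+1}(V)=\chi_+(z)\chi_-(z)=X\widehat{X}$ with $z={\rm Det}_{2k}(B)$.

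Compare this with the isomorphism route of \cite{Hitzer2}, which stays inside $R\otimes\cl_{p',q'}$. There one writes $B^{-1}=f(B)z^{-1}$ and inverts the $R$-scalar $z$ through its conjugate, which $\Phi^{-1}$ turns into $\widehat{\ }$. That route gives the inverse formula and an invertibility test almost for free. Your argument needs the two-block structure of the complexified odd algebra. In exchange, it proves what the statement actually asserts and the paper leaves to the citation: that $X\widehat{X}$ is the intrinsic degree-$2^{k+1}$ determinant of Definition~\ref{def:det}.
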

\begin{corollary}[\cite{Hitzer2}]
    If ${\rm Det}(V)\neq 0$, then the inverse of $V\in\cl_{p,q}, p+q=2k+1$ is given by
    $$
       V^{-1} = \frac{\Phi^{-1}(f(B)))\widehat{\Phi^{-1}}({\rm Det}_{2k}(B))}{{\rm Det}_{2k+1}(V)},
    $$
    where ${\rm Det}$ and $B$ are defined in (\ref{thm:gen1}).
\end{corollary}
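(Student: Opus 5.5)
The plan is to check that the proposed inverse formula is a right inverse of $V$, and then deduce that it is two-sided. Put $B:=\Phi(V)$ and $D:={\rm Det}_{2k}(B)=Bf(B)$.

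\textbf{Step 1: $D$ lies in the center.} The hypothesis ${\rm Det}_{2k}(U)=Uf(U)\in\cl^0_{p',q'}$ is an identity of polynomial maps in the coordinates of $U$. It therefore persists after extending scalars to $\mathbb{C}$ or $\mathbb{S}$, because $f$ is built from products and conjugations, and conjugations act only on the $\cl_{p',q'}$ factor. Hence $D=Bf(B)\in\mathbb{C}\otimes\cl^0_{p',q'}$ (resp.\ $\mathbb{S}\otimes\cl^0_{p',q'}$), i.e. $D=\alpha+r\beta$ with $\alpha,\beta$ real. Pulling back, $\Phi^{-1}(D)=\alpha e+\beta e_{12\ldots n}$. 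Since $n$ is odd, this is a linear combination of the scalar and the pseudoscalar, so it is central in $\cl_{p,q}$.

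\textbf{Step 2: the product is a scalar.} Grade involution negates grade $n$ when $n$ is odd, so $\widehat{\Phi^{-1}(D)}=\alpha e-\beta e_{12\ldots n}$. Therefore
$$\Phi^{-1}(D)\widehat{\Phi^{-1}(D)}=\alpha^2-\beta^2e_{12\ldots n}^2,$$
which is a real scalar. By (\ref{thm:gen1}) this scalar is ${\rm Det}_{2k+1}(V)$.

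\textbf{Step 3: verify the right inverse.} Since $\Phi$ is an algebra isomorphism,
$$V\,\Phi^{-1}(f(B))=\Phi^{-1}(B)\Phi^{-1}(f(B))=\Phi^{-1}(Bf(B))=\Phi^{-1}(D).$$
Multiplying on the right by $\widehat{\Phi^{-1}(D)}$ gives
$$V\,\Phi^{-1}(f(B))\widehat{\Phi^{-1}(D)}={\rm Det}_{2k+1}(V).$$
Dividing by the nonzero scalar ${\rm Det}_{2k+1}(V)$ shows that the stated element is a right inverse of $V$.

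\textbf{Step 4: two-sidedness.} $\cl_{p,q}$ is a finite-dimensional associative algebra, so a one-sided inverse is automatically two-sided. Indeed, left multiplication by $V$ is surjective, hence injective, and from $VX=1$ we get $V(XV)=V$, so $XV=1$. This gives $V^{-1}$ as stated, where $\widehat{\Phi^{-1}}(D)$ is read as $\widehat{\Phi^{-1}(D)}$.

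The main obstacle is Step 1. We need $D$ to have no $1\otimes(\text{nonzero grade})$ components, which is what makes it central and forces its grade involution to be its $\mathbb{C}$/$\mathbb{S}$-conjugate. I would justify this by noting that the complexified (resp.\ split-complexified) algebra is $\cl_{p',q'}\oplus r\,\cl_{p',q'}$ with real-linear conjugations. Then $f$ of a general element can be expanded, and the polynomial identity stated for real arguments is applied coefficient-wise via Zariski density: if $Uf(U)$ has vanishing higher-grade components for all real $U$, it has them for all $U$ with coefficients in any commutative $\mathbb{R}$-algebra.
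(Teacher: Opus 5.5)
Your proposal is correct. It matches the argument the paper implicitly relies on: the paper states this corollary without proof, citing Hitzer--Sangwine, and the key step is your Step 3, $V\,\Phi^{-1}(f(B))=\Phi^{-1}(Bf(B))=\Phi^{-1}({\rm Det}_{2k}(B))$, combined with (\ref{thm:gen1}), just as for the paper's parallel inverse formulas $y\widehat{Uy}/{\rm Det}(U)$ and $F(V)\widehat{VF(V)}/{\rm Det}_{2k+1}(V)$. Steps 1--2 are not really the main obstacle, since (\ref{thm:gen1}) already identifies the product with the scalar ${\rm Det}_{2k+1}(V)$; they do, however, fairly justify extending $f$ to $\mathbb{C}\otimes\cl_{p',q'}$ and $\mathbb{S}\otimes\cl_{p',q'}$, and Step 4 correctly supplies the two-sidedness the paper leaves implicit.
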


Now, we present a method for obtaining explicit and isomorphism-invariant formulas for the determinant and inverse in the following theorem.
\begin{thm}
    Let us have $U\in\cl_{p',q'},p'+q'=2k$ and
    \begin{eqnarray*}
    {\rm Det}_{2k}(U)=Uf(U)\in\cl_{p',q'}^0,
    \end{eqnarray*}
    where $f(U)$ is a sum of products of conjugations. Then, for $V\in\cl_{p,q}, p+q=2k+1$, we have
    \begin{eqnarray*}
    {\rm Det}_{2k+1}(V)=VF(V)\widehat{VF(V)},
    \end{eqnarray*}
    \begin{eqnarray}
    F(V):=\Phi^{-1}\circ f \circ \Phi(V),\label{thm:gen2}\quad
    \Phi:\cl_{p,q}\xrightarrow{\sim}
        \begin{cases}
            \mathbb{C}\otimes\cl_{p',q'},\quad \text{if $q'$ is even},\nonumber\\
            \mathbb{S}\otimes\cl_{p',q'},\quad \text{else}.
        \end{cases}  \nonumber
    \end{eqnarray}
\end{thm}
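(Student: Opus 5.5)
Starting from the Hitzer--Sangwine formula (\ref{thm:gen1}) and the preceding corollary, I would rewrite everything inside $\cl_{p,q}$ by transporting each factor back through $\Phi^{-1}$, as in the proof of Theorem~\ref{n==7}. Set $B:=\Phi(V)$. By hypothesis ${\rm Det}_{2k}(B)=Bf(B)$, where $f$ is built from products and sums of conjugations. This is understood as extended $\mathbb{C}$- (or $\mathbb{S}$-)linearly to $\mathbb{C}\otimes\cl_{p',q'}$, acting only on the $\cl_{p',q'}$ factor; this extension must be checked to be consistent with how Theorem~\ref{n=7} is used. Since $\Phi^{-1}$ is an algebra isomorphism, it is multiplicative and linear, so
$$
\Phi^{-1}({\rm Det}_{2k}(B))=\Phi^{-1}(B)\,\Phi^{-1}(f(B))=V\,(\Phi^{-1}\circ f\circ\Phi)(V)=VF(V).
$$
This is the key identity. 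It turns the first factor of (\ref{thm:gen1}) into $VF(V)$ with no further work.

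The second factor is $\widehat{\Phi^{-1}}({\rm Det}_{2k}(B))$, which I read as $\widehat{\Phi^{-1}({\rm Det}_{2k}(B))}$, the grade involution in $\cl_{p,q}$ applied after pulling back. Substituting the identity above gives $\widehat{VF(V)}$, so ${\rm Det}_{2k+1}(V)=VF(V)\widehat{VF(V)}$, as claimed. The same substitution turns the inverse formula of the corollary into $V^{-1}=F(V)\widehat{VF(V)}/{\rm Det}_{2k+1}(V)$, which I would record as a corollary.

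To make $F$ genuinely explicit and free of $\Phi$, I would then generalize Lemma~\ref{lem:b}. For each grade $s$ of $\cl_{p',q'}$, let $\underline{s}\subseteq\mathbb{B}_{p,q}$ be the set of basis elements $e_A$ such that $\Phi(e_A)$ has grade $s$ (up to sign and the factor $r$). Each basis element of $\cl_{p,q}$ maps to $\pm1\otimes E_{A'}$ or $\pm r\otimes E_{A'}$, so the sets $\underline{s}$ partition $\mathbb{B}_{p,q}$. The argument of Lemma~\ref{lem:b} then gives $m_M(\Phi(U))=\Phi(b_{\underline{M}}(U))$, where $\underline{M}=\bigcup_{s\in M}\underline{s}$. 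Hence every conjugation in $f$ becomes a basis conjugation, and $F$ is a sum of products of basis conjugations of $V$, computed entirely within $\cl_{p,q}$.

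I expect the main obstacle to be bookkeeping rather than algebra. First, the correct reading of $\widehat{\Phi^{-1}}$ must be fixed. Second, the definition of $\Phi$ in the statement (targeting $\cl_{p',q'}$ with the parity condition on $q'$) must be checked to agree with Definition~\ref{def:Phi} (targeting $\cl_{q,p-1}$ with the condition on $k+q$), so that the cited theorem applies. I would settle both by checking $E_m^2=(e_1e_{m+1})^2=-e_1^2e_{m+1}^2$ and $(e_{1\ldots n})^2=(-1)^{k}(-1)^q$. This confirms that the signature is swapped and that $r=i$ exactly when $k+q$ is odd.
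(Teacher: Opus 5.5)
Your proposal is correct and is essentially the paper's proof. Both rewrite ${\rm Det}_{2k}(\Phi(V))=\Phi(V)f(\Phi(V))=\Phi(V)\Phi(F(V))=\Phi(VF(V))$ using multiplicativity of $\Phi$, pull back through $\Phi^{-1}$ in (\ref{thm:gen1}), and read $\widehat{\Phi^{-1}}(\cdot)$ as the grade involution of the pulled-back element. Your basis-conjugation description of $F$ is extra and corresponds to the paper's separate Lemma~\ref{lem:B}.

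One correction to your side check: $(e_{1\ldots n})^2=(-1)^{k+q}$ and $q'=p-1$, so the statement's criterion ``$q'$ even'' coincides with ``$k+q$ odd'' only when $k$ is odd. It holds for $n=7$ but fails e.g.\ for $n=5$, so the case split must be taken from Definition~\ref{def:Phi}. This is harmless here, since the argument uses only that $\Phi$ is an algebra isomorphism for which (\ref{thm:gen1}) holds.
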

\begin{proof}
    Since by definition $F(V) = \Phi^{-1} \circ f \circ \Phi(V)$, we can write
    \[
        f \circ \Phi(V) = \Phi \circ F(V).
    \]
    Substituting this into \eqref{thm:gen1}, we obtain
    \begin{eqnarray*}
        \mathrm{Det}_{2k+1}(V)
        &=&\Phi^{-1}(\Phi(V)\Phi\circ F(V))\widehat{\Phi^{-1}}(\Phi(V)\Phi\circ F(V))\\
        &=&\Phi^{-1}\circ \Phi(VF(V))\widehat{\Phi^{-1}\circ\Phi}(VF(V))\\
        &=&VF(V)\widehat{VF(V)}
    \end{eqnarray*}
    since $\Phi^{-1} \circ \Phi$ acts as the identity on $\cl_{p,q}$.  
    This completes the proof.
\end{proof}

\begin{corollary}
    If ${\rm Det}_{2k+1}(V)\neq 0$, then the inverse of $V\in\cl_{p,q}, p+q=2k+1$ is given by
    $$
       V^{-1} = \frac{F(V)\widehat{VF(V)}}{{\rm Det}_{2k+1}(V)},
    $$
    where ${\rm Det}_{2k+1}$ is defined in Theorem \ref{thm:gen2}.
\end{corollary}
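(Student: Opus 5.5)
The corollary follows from the formula ${\rm Det}_{2k+1}(V)=VF(V)\widehat{VF(V)}$ of the preceding theorem, so the plan is to show that $X:=F(V)\widehat{VF(V)}$ is a two-sided inverse of $V$ up to the scalar ${\rm Det}_{2k+1}(V)$.

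\textbf{Right inverse.} Since ${\rm Det}_{2k+1}(V)\in\cl^0_{p,q}$ is a scalar by the theorem, we can compute directly:
$$VX=VF(V)\widehat{VF(V)}={\rm Det}_{2k+1}(V).$$
Dividing by the nonzero scalar ${\rm Det}_{2k+1}(V)$ gives $V\cdot\frac{X}{{\rm Det}_{2k+1}(V)}=1$.

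\textbf{Left inverse.} There are two routes.

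\emph{Route 1 (finite dimension).} $\cl_{p,q}$ is a finite-dimensional associative unital algebra. A right inverse in such an algebra is automatically a two-sided inverse. Indeed, if $VX=1$, then left multiplication $L_V$ is surjective, hence bijective. From $V(XV)=(VX)V=V=V\cdot 1$ and injectivity of $L_V$ we get $XV=1$.

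\emph{Route 2 (via $\Phi$, for completeness).} Set $B=\Phi(V)$ and $D:={\rm Det}_{2k}(B)=Bf(B)$, which lies in $\mathbb{C}$ or $\mathbb{S}$ tensored with scalars. The element $\Phi(X)=f(B)\,\Phi(\widehat{\Phi^{-1}(D)})$ is the product of $f(B)$ and a central element. Hence $\Phi(XV)=f(B)B\cdot(\text{central})$. In the even case $f(B)B=Bf(B)=D$, because $D$ is central and $B$ is invertible whenever $D$ is invertible. It follows that $\Phi(XV)=\Phi(VX)$.

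\textbf{Main obstacle.} The only delicate point is justifying that the right inverse is also a left inverse. Route 1 settles this cleanly without tracking the isomorphism, so it is the argument I would use. The remaining step is simply to divide by ${\rm Det}_{2k+1}(V)\neq0$.
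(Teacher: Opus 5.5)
Your proof is correct and essentially matches the paper, which gives no separate proof and treats the corollary as an immediate consequence of the preceding theorem: $V\cdot F(V)\widehat{VF(V)}={\rm Det}_{2k+1}(V)$ is a nonzero real scalar. Your Route 1 makes explicit the standard fact, left implicit in the paper, that a one-sided inverse in the finite-dimensional algebra $\cl_{p,q}$ is two-sided; Route 2 is not needed, and it quietly relies on that same fact in $\mathbb{C}\otimes\cl_{p',q'}$ or $\mathbb{S}\otimes\cl_{p',q'}$ together with the unstated (true) observation that ${\rm Det}_{2k+1}(V)\neq 0$ forces $D$ to be invertible.
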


\begin{lemma}\label{lem:B}
    $\forall  S=1,2,\ldots,n-1, \exists \underline{S}\subseteq \mathbb{B}_{p,q}$ such that
    $$
    m_S(\Phi(V)) = \Phi(b_{\underline{S}}(V)),\quad  V\in\cl_{p,q}, p+q=n=2k+1. 
    $$
\end{lemma}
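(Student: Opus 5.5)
The plan is to generalize the proof of Lemma~\ref{lem:b} from $n=7$ to arbitrary odd $n=2k+1$. The key structural fact is that $\Phi$ sends every standard basis element of $\cl_{p,q}$ to $\pm r^\epsilon\otimes E_A$ for a single basis element $E_A$ of $\cl_{p',q'}$, with $\epsilon\in\{0,1\}$. Once this is established, the set $\underline{S}$ can be defined as the preimage of grade $S$:
$$
\underline{S}:=\{e_A\in\mathbb{B}_{p,q}\ :\ \Phi(e_A)\in \mathbb{R}\otimes\cl^S_{p',q'}\ \text{or}\ \Phi(e_A)\in r\mathbb{R}\otimes\cl^S_{p',q'}\}.
$$
The identity then follows by linearity, exactly as in the displayed computation in the proof of Lemma~\ref{lem:b}.

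\textbf{Step 1: the image of basis elements.} I will describe $\Phi$ on basis elements, generalizing Table~\ref{tab}. Write a basis element as $e_A$, and let $A'=A\setminus\{1\}$ be shifted down by one, so that indices lie in $\{1,\dots,n-1\}$.
\begin{itemize}
\item If $A$ contains neither $1$ nor the full set, then $|A|$ is even. Using $e_{a+1\,b+1}=\pm e_{1\,a+1}e_{1\,b+1}$ (up to a sign from $e_1^2=\pm1$), the element $e_A$ is a product of pairs $e_{1\,a+1}$. Its image is therefore $\pm1\otimes E_{A'}$.
\item If $1\in A$ and $|A|$ is even, write $e_A=\pm e_{1\,a+1}\cdots$. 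The image is again $\pm 1\otimes E_{A'}$ with $|A'|=|A|-1$.
\item If $|A|$ is odd, write $e_A=\pm e_{12\cdots n}\,e_{A^c}$, where $A^c$ is the complement of $A$ and has even cardinality. Then $\Phi(e_A)=\pm (r\otimes E)\Phi(e_{A^c})$, which falls into the previous two cases.
\end{itemize}
Here I use that $e_{12\cdots n}$ is central for odd $n$, and that $\{e_{1\,m+1}\}$ together with $e_{1\cdots n}$ generate the algebra.

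\textbf{Step 2: checking the isomorphism.} I will verify that $\Phi$ is a well-defined isomorphism of the stated type. This amounts to checking two things: that the images $1\otimes E_m$ satisfy the anticommutation relations of $\cl_{p',q'}$, and that $r^2=(e_{1\cdots n})^2$, which fixes $\mathbb{C}$ versus $\mathbb{S}$. I will take this as given from \cite{Hitzer2}, as the paper does. A dimension count ($2^{n}=2\cdot 2^{n-1}$) confirms that the images $\pm r^\epsilon\otimes E_{A'}$ are distinct up to sign. Hence the $2^n$ basis elements map bijectively to the real basis $\{1\otimes E_B,\ r\otimes E_B\}$ of the target, up to sign.

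\textbf{Step 3: linearity.} Since each $\Phi(e_A)$ is a signed basis element, $m_S$ acts on it by $-1$ exactly when its $E$-grade equals $S$, that is, when $e_A\in\underline{S}$. Here $m_S$ acts on $\mathbb{C}\otimes\cl_{p',q'}$ through the second factor. We get $m_S(\Phi(e_A))=\Phi(b_{\underline{S}}(e_A))$, and extending linearly to arbitrary $V=\sum u_Ae_A$ gives the claim.

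The main obstacle is Step 1: tracking that every basis element really maps to a single signed pure tensor (rather than a sum), uniformly in $n$ and in the signature. The signs are irrelevant to the argument, which simplifies matters. So I will prove the single-basis-element property by induction on $|A|$, using only that $\Phi$ is multiplicative and that the product of two basis elements is a signed basis element.
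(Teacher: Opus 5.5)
Your proposal is correct. It is the natural generalization of the table-based argument the paper gives for Lemma~\ref{lem:b}; the paper states Lemma~\ref{lem:B} without a separate proof. That argument writes each $\Phi(e_A)$ as a signed pure tensor $\pm r^{\epsilon}\otimes E_B$, takes $\underline{S}$ to be the preimage of $E$-grade $S$, and concludes by linearity. Two small fixes: your first bullet should read ``$1\notin A$ and $|A|$ even'', since as written it is false (e.g.\ for $e_2$). Also, the bijectivity claim in Step~2 is not needed, because Step~3 uses only that each $\Phi(e_A)$ is homogeneous in the second-factor grading.
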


Note that $F(V)$ can indeed be computed without directly applying the isomorphism~$\Phi$.
Using Lemma~\ref{lem:B}, any conjugations of $\Phi(V)$ can be replaced by the corresponding basis conjugations of $V$.  
Therefore, the formula $F(V) = \Phi^{-1} \circ f \circ \Phi(V)$
is equivalent to applying $f$ directly to $V$, but replacing each conjugation by the respective basis conjugation.

\section{Conclusions}

In this paper, we have successfully derived an explicit formula for the determinant in geometric algebras $\cl_{p,q}$ with $p+q=7$. By introducing a novel family of basis conjugation operations, we have derived formulas that are intrinsic to the algebra, eliminating the need for external algebraic isomorphisms that have complicated previous approaches. This advancement yields a clearer geometric interpretation and enhances the stability and reliability of practical computations.

Our explicit formulas extend and refine earlier results \cite{Hitzer2}. Our main contributions are as follows. First, we presented an explicit, isomorphism-free determinant and inverse formulas for $\cl_{p,q}$ with 
$p+q=7$ (Theorems~\ref{n==7} and \ref{n===7}). Second, we developed and benchmarked a numerical implementation, demonstrating that our isomorphism-free formulas offer superior numerical robustness compared to earlier isomorphism-dependent methods, though with a modest increase in computational cost. Benchmarks also show that our isomorphism-free formulas have lower computational cost that known recursive method based on the Faddeev–LeVerrier algorithm \cite{Prodanov, Shirokov}. The implementation is publicly available at the repository  \url{github.com/kamranuz/clifford_7d}. Third, we established a general method (Theorem~\ref{thm:gen2}) for constructing isomorphism-free determinant and inverse formulas for Clifford algebras over any odd-dimensional vector space from known formulas for the preceding even dimension. 

The framework of basis conjugations developed here not only solves a specific problem in dimension seven but also provides a new algebraic perspective that could be applied to other invariant theoretic problems in geometric algebra. Future research directions include discovering analogous explicit determinant formulas in higher odd dimensions, optimizing the computational efficiency of the formulas further and exploring applications of these stable numerical methods in areas such as physics and engineering (see, for example, various applications of $\cl_{p,q}$ with $p+q=7$ in \cite{Trayling,Shaw,Ricardo}).

\section*{Acknowledgment}

The results of this paper were reported at the ENGAGE Workshop (Hong Kong, July 2025) within the International Conference Computer
Graphics International 2025 (CGI 2025). The authors are grateful to the organizers and the participants of this conference for fruitful discussions.

The authors are grateful to the anonymous Reviewers for their careful reading of the paper and helpful comments on how to improve the presentation.

The publication was prepared within the framework of the Academic Fund Program at HSE University (grant №26-00-017 Clifford algebras and matrix methods: theory and applications).

\textbf{\!Data availability.} \!Data sharing not applicable to this article as no datasets were generated or analyzed during the current study.

\textbf{\!Declarations conflict of interest.} The authors declare that they have no conflict of interest.

\end{document}